\documentclass[smallextended,nospthms,runningheads]{svjour3}
\usepackage[T1]{fontenc}
\usepackage[utf8]{inputenc}
\usepackage{newtxtext}
\usepackage{amsmath,amssymb,amsthm,mathtools}
\usepackage{newtxmath}
\usepackage{booktabs,array}
\usepackage{microtype}
\usepackage[numbers,sort&compress]{natbib}
\usepackage{xcolor}
\usepackage{ptrf-style}
\usepackage[colorlinks=true,linkcolor=blue!50!black,citecolor=blue!50!black,urlcolor=blue!50!black]{hyperref}
\usepackage{bookmark}
\newcommand{\doi}[1]{\url{https://doi.org/#1}}
\numberwithin{equation}{section}
\newtheoremstyle{ptrfplain}{7pt plus 2pt minus 2pt}{7pt plus 2pt minus 2pt}
  {\itshape}{}{\sffamily\bfseries}{}{0.5em}{}
\newtheoremstyle{ptrfdefinition}{7pt plus 2pt minus 2pt}{7pt plus 2pt minus 2pt}
  {\normalfont}{}{\sffamily\bfseries}{}{0.5em}{}
\theoremstyle{ptrfplain}
\newtheorem{theorem}{Theorem}[section]
\newtheorem{lemma}[theorem]{Lemma}
\newtheorem{proposition}[theorem]{Proposition}
\newtheorem{corollary}[theorem]{Corollary}
\theoremstyle{ptrfdefinition}

\newtheorem{remark}[theorem]{Remark}

\newcommand{\N}{\mathbb N}
\newcommand{\R}{\mathbb R}
\newcommand{\E}{\mathbb E}
\newcommand{\Prob}{\mathbb P}
\newcommand{\1}{\mathbf 1}
\newcommand{\FV}{Fleming--Viot}
\newcommand{\diag}{\operatorname{diag}}

\newcommand{\spanof}{\operatorname{span}}
\newcommand{\eps}{\varepsilon}

\allowdisplaybreaks[2]
\journalname{Probability Theory and Related Fields}
\title{Ancestral circulation beyond two \FV\ particles}
\titlerunning{Ancestral circulation beyond two Fleming--Viot particles}
\author{Yaohui Lin \and Tiankui Lu}
\authorrunning{Y. Lin and T. Lu}
\institute{Yaohui Lin  \at
  School of Mathematical Sciences, South China Normal University,
  Guangzhou, China\\
  \email{linyuan@m.scnu.edu.cn} (corresponding author)
  \and
  Tiankui Lu \at
  Information and Artificial Intelligence, Anhui Agricultural University,
  Hefei, China\\
  \email{lutiankui@stu.ahau.edu.cn}}
\date{}
\hypersetup{pdftitle={Ancestral circulation beyond two Fleming-Viot particles},pdfauthor={Yaohui Lin and Tiankui Lu}}
\begin{document}
\maketitle
\begin{abstract}
Reversible motion can produce a directed circulation of surviving ancestry
in a \FV\ particle system. We prove that two particles are the exact
threshold for universal ancestral balance in finite systems with uniform
copying. For two particles, the stationary current of surviving labels
vanishes for every finite irreducible reversible Markov chain and every positive
killing vector. On a fixed complete three-state graph, nonzero circulation
occurs for each population size at least three on an open dense set of
killing vectors of full Lebesgue measure. A single killing vector produces
circulation simultaneously for every population size at least three, and
such vectors form a dense $G_\delta$ set of full measure. The current is
divergence free and vanishes on bridges, so three sites and three particles
are the smallest possible setting for circulation. We also identify the
probabilistic effect of this circulation: for sufficiently rare transplants,
migration followed by fixation is more likely in one direction than in
the reverse direction.
\keywords{\FV\ particle system \and Ancestral current \and Fixation probability
\and Reversibility \and Nonexplosion \and Green kernel}
\subclass{60J27 \and 60J80 \and 60J10}
\end{abstract}

\section{Introduction}\label{sec:introduction}

Consider $N$ particles which move independently according to a reversible
continuous time Markov chain on a finite set $I$. A particle at $i\in I$
is killed at rate $k_i>0$; it immediately moves to the position of a
uniformly chosen survivor and takes that survivor's ancestral label.
This is the \FV\ system. Starting with distinct labels, eventually all
particles have the same label. We ask whether the surviving ancestry
inherits the balance of the underlying motion, or whether selection can
force it around a directed cycle.

A reversible Markov chain has balanced edge fluxes at stationarity.
Ancestral selection can turn this balance into circulation. For two
particles, the first death determines the eventual ancestor. With three
or more particles, ancestry is selected through successive reproduction
events. This change is decisive: spatially varying killing can turn
reversible motion into a directed ancestral circulation. Our main result
determines its exact population threshold and establishes the phenomenon
for every larger population on a three-state motion graph. The
two-particle case rests on a duality known from the nonexplosion theory;
larger populations require a different mechanism.

Let $Q$ be the motion generator, with off-diagonal rates $q(i,j)$, and
suppose that $Q$ is irreducible and reversible with respect to positive
weights $\nu=(\nu_i)_{i\in I}$:
\begin{equation}\label{eq:reversibility}
 \nu_iq(i,j)=\nu_jq(j,i),\qquad
 (Qf)(i)=\sum_{j\ne i}q(i,j)\bigl(f(j)-f(i)\bigr).
\end{equation}
We assume $|I|\ge2$ and $N\ge2$. Each killed particle chooses its donor
among the other $N-1$ particles. Copying particles at the same site can
change their labels and is included in the dynamics.

We weight stationary label-carrying jumps by the probability that the
label survives. To define the resulting transport coefficients, write
$e_i$ for the unit vector at $i$, $|c|=\sum_i c_i$, and
$\N_0=\{0,1,2,\ldots\}$. The population counts form an irreducible chain
on $\Omega_N=\{c\in\N_0^I:|c|=N\}$, with stationary probability $\pi_N$.
For $|d|=N-1$, let $h_N(i,d)$ be the probability that the label of one
specified particle at $i$ ultimately occupies the whole population when
the remaining particles have counts $d$. Define
\begin{equation}\label{eq:transport}
 F_{ij}^{(N)}=\sum_{c:c_i>0}\pi_N(c)c_i h_N(j,c-e_i).
\end{equation}
Thus $q(i,j)F_{ij}^{(N)}$ is the stationary intensity of jumps from $i$
to $j$, weighted by the probability that the moving particle's label
survives. The difference between opposite intensities is the ancestral
current:
\begin{equation}\label{eq:current}
 J_{ij}^{(N)}=q(i,j)F_{ij}^{(N)}-q(j,i)F_{ji}^{(N)}.
\end{equation}
The coefficient $F_{ij}^{(N)}$ is defined for every pair of sites. In
expressions involving $c_i h_N(j,c-e_i)$, the term is set to zero when
$c_i=0$, before evaluating the argument of $h_N$.

The main result gives a sharp distinction between two particles and
larger populations. For the second part, the motion is fixed on the
complete three-state graph:
\begin{equation}\label{eq:triangle}
 I=\{0,1,2\},\qquad q(i,j)=1\ (i\ne j),\qquad \nu_i=1.
\end{equation}
Put $\Delta_N(k)=F_{02}^{(N)}-F_{20}^{(N)}$ on this graph.

\begin{theorem}\label{thm:main}
For every finite reversible motion \eqref{eq:reversibility} and every
positive killing vector, the two-particle transport satisfies
\begin{equation}\label{eq:two-balance-main}
 \frac{F_{ij}^{(2)}}{\nu_i}=\frac{F_{ji}^{(2)}}{\nu_j}
 \qquad(i,j\in I).
\end{equation}
In particular, all two-particle edge currents vanish.

For the three-state motion \eqref{eq:triangle} and every $N\ge3$, the set
\begin{equation}\label{eq:generic-set}
 \mathcal U_N=\{k\in(0,\infty)^3:\Delta_N(k)\ne0\}
\end{equation}
is open and dense and has full Lebesgue measure. Its complement is the
intersection of $(0,\infty)^3$ with a proper algebraic set. Furthermore,
\begin{equation}\label{eq:common-set}
 \mathcal U_* = \bigcap_{N=3}^{\infty}\mathcal U_N
\end{equation}
is a dense $G_\delta$ set of full measure. Every $k\in\mathcal U_*$ gives
nonzero circulation in the three-state system for every $N\ge3$.
\end{theorem}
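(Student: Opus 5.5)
We treat the two statements separately: the first by an explicit duality computation for two particles, the second by algebraic properties of $\Delta_N$ as a function of $k$.

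\emph{Two particles.} Here $c=e_i+e_l$ for some sites $i,l$, and $h_2(j,e_l)$ is the probability that the particle at $j$ survives the first killing while the other sits at $l$. The first death fixes the ancestor. So $h_2(j,e_l)=u(j,l)$, where $u$ is the probability that the pair chain $(X,Y)$ started at $(j,l)$ has its first killing fall on $Y$. Write $\tilde Q=Q\otimes1+1\otimes Q$ for the generator of the independent pair. Then $u$ solves
\[
 (\tilde Q-K)u=-k_Y, \qquad K(x,y)=k_x+k_y .
\]
Since $\tilde Q$ is reversible for $\nu\otimes\nu$, the Green operator $G=(K-\tilde Q)^{-1}$ is self-adjoint in $L^2(\nu\otimes\nu)$, and $u=Gk_Y$.

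The stationary law $\pi_2$ of the ordered pair is obtained by running the pair chain, killing at rate $K$, and regenerating. By the renewal identity, $\pi_2$ is proportional to $\mu\,G$ with some entrance measure $\mu$. At a killing from $(x,y)$, the killed particle jumps onto the survivor, so the entrance measure is supported on the diagonal: $\mu(z,z)\propto \sum_{y}\pi_2(z,y)k_y+\pi_2(y,z)k_y$, using the symmetry of $\pi_2$.

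Then $F_{ij}^{(2)}=\sum_l \pi_2(i,l)\,u(j,l)$, up to the multiplicity factor. Substituting $\pi_2=\mu G$ and $u=Gk_Y$, and using self-adjointness of $G$ (in the form $G(a,b)\nu(b)=G(b,a)\nu(a)$), we get $F_{ij}^{(2)}/\nu_i=\sum \mu(z,z)\,\cdots$. This expression should be symmetric in $(i,j)$ once the swap symmetry $(x,y)\mapsto(y,x)$ is used. The resulting balance gives $q(i,j)F_{ij}=q(i,j)\nu_iF_{ji}/\nu_j=q(j,i)F_{ji}$, so $J^{(2)}=0$.

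I expect this bookkeeping to be the delicate step: the $i$-index sits in the stationary measure while the $j$-index sits in the harmonic function. The key is to express both through the single operator $G$, so that the roles of $i$ and $j$ can be exchanged. Alternatively, one can write $F_{ij}$ as a bilinear form $\langle \mathbf 1_i\otimes\cdot,\ G\,\cdot\rangle$ and check its symmetry directly. This is the duality mentioned in the introduction.

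\emph{Three-state, $N\ge3$.} Both $\pi_N$ and $h_N$ solve finite linear systems whose coefficients are polynomial in $k$ and whose determinants are positive on $(0,\infty)^3$, by irreducibility and absorption. Hence $\Delta_N$ is a rational function $P_N/D_N$ with $D_N>0$, and $\{\Delta_N=0\}=\{P_N=0\}\cap(0,\infty)^3$. If $P_N\not\equiv0$, its zero set is a proper algebraic set. That set is closed, nowhere dense and Lebesgue-null, which gives every claim about $\mathcal U_N$.

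It therefore suffices to exhibit, for each $N\ge3$, one $k$ with $\Delta_N(k)\ne0$. This is the main obstacle, because exact evaluation for general $N$ is hard. I would use a perturbative regime. Take $k=(1,1,1)+\varepsilon v$; the symmetric point gives $\Delta_N=0$. Expand to first or second order in $\varepsilon$, using the $S_3$-symmetry of the unperturbed system, where $\pi_N$ is multinomial and $h_N\equiv1/N$. The derivative is a linear functional of $v$, and it must vanish on $v$ symmetric under the swap $0\leftrightarrow2$. So one picks an asymmetric $v$, possibly needing second order with a cyclic choice such as $v$ giving a chirality-sensitive term. The perturbed $h_N$ can be computed by martingale/Green arguments for the neutral Moran-type coalescence. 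If first order vanishes identically, a limiting regime is the fallback, for example $k_0\to\infty$ or $k_1\to0$ with rare killing, where $\Delta_N$ reduces to an explicit $N$-dependent expression whose nonvanishing can be checked.

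Finally, $\mathcal U_*$ is a countable intersection of open dense sets, each with null complement. Baire's theorem gives a dense $G_\delta$, and countable subadditivity gives full measure.
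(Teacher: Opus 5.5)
Your rationality argument and the Baire/measure conclusion for $\mathcal U_N$ and $\mathcal U_*$ are fine and match the paper. The other two steps have genuine gaps; the second defeats the plan as written.

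\textbf{Two particles.} Your framework (pair chain killed at rate $k_x+k_y$, regenerated on the diagonal) is sound. However, the symmetry does not follow from self-adjointness of the pair Green operator plus the swap $(x,y)\mapsto(y,x)$. That operator is $\int_0^\infty P_s\otimes P_s\,ds$ with $P_s=e^{-s(\diag(k)-Q)}$. With $p_t=P_t\1$, $\ell_t=P_tk$ and diagonal entrance weights $m_z$, you get
\[
 \sum_l\pi_2(i,l)\,u(j,l)\propto\sum_z m_z\int_0^\infty\!\!\int_0^\infty P_s(z,i)\,p_t(j)\,\ell_{s+t}(z)\,ds\,dt .
\]
Whether this becomes symmetric after division by $\nu_i$ depends entirely on the value of $m$. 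The missing step is solving your self-consistency equation: $m_z\propto\nu_z$. With $m=\nu$, reversibility turns the occupation measure into $\tfrac12\nu_xG(x,y)$, where $G=(\diag(k)-Q)^{-1}$ is the one-particle Green kernel. The killing flux back into $(z,z)$ is then $\tfrac12\bigl[\nu_z(Gk)(z)+\sum_y\nu_yG(y,z)k_y\bigr]=\nu_z$, since $Gk=\1$ and $\nu_yG(y,z)=\nu_zG(z,y)$. This is Kwa\'{s}nicki's invariant measure, which the paper uses. Then $\sum_z\nu_zP_s(z,i)\ell_{s+t}(z)=\nu_i\ell_{2s+t}(i)$ and $\int_0^\infty\ell_{2s+t}(i)\,ds=\tfrac12p_t(i)$ give $\tfrac{\nu_i}{2}\int_0^\infty p_t(i)p_t(j)\,dt$. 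That is the symmetric formula of Theorem~\ref{thm:green-balance}; without identifying the stationary law, your bookkeeping step is only a hope.

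\textbf{The witness for $N\ge3$.} You correctly call this the crux but do not supply it, and your proposed route provably gives nothing at the orders you name. Relabelling the triangle by a permutation $\sigma$ preserves or reverses the orientation of the circulation, and $J_{02}=J_{21}=J_{10}=\Delta_N$. Hence $\Delta_N(\sigma k)=\operatorname{sgn}(\sigma)\Delta_N(k)$. Every homogeneous Taylor term of $\Delta_N$ at a constant killing vector is therefore an alternating polynomial in the perturbation $v$, divisible by $(v_0-v_1)(v_1-v_2)(v_2-v_0)$. So the first- and second-order terms vanish for every direction. There is no chirality-sensitive quadratic term to choose, and no antisymmetric $v$ with a nonzero derivative. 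The paper sees the first-order cancellation explicitly: the founder response offsets the stationary response.

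The first possibly nonzero term is cubic, and computing it is the bulk of the paper. It needs third-order founder coefficients, stationary responses via polynomial Poisson equations solved by descent through degrees, and neutral moments up to degree four. The result is $-C_N\eps^3$, where $C_N$ carries a factor $N-3$. At $N=3$ this cubic term vanishes as well. That is why the paper needs a separate exact solution of the ten stationary and eighteen founder equations at $k=(1,2,4)$. Your fallback \emph{limiting regime} is unspecified, and you give no argument that it yields a nonzero value for every $N$. So the essential nonvanishing input for all $N\ge3$ is missing from the proposal.
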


The population threshold is sharp: at $N=2$ the current vanishes
identically in $k$ on every reversible graph, while at $N=3$ it is already
nonzero at a generic $k$ on the triangle. At constant killing, every label
fixes with probability $1/N$ and all edge currents vanish.

\begin{remark}
For every finite motion graph and every $N\ge2$, the current satisfies
\begin{equation}\label{eq:divergence-main}
 \sum_{j\ne i}J_{ij}^{(N)}=0\qquad(i\in I).
\end{equation}
This follows from the stationarity of the population and the fixation
equation; see Proposition~\ref{prop:conservation}. It forces the current
to vanish on every bridge. On the triangle it gives
\begin{equation}\label{eq:triangle-orientation}
 J_{02}^{(N)}=J_{21}^{(N)}=J_{10}^{(N)}=\Delta_N.
\end{equation}
Consequently, three sites and three particles are the smallest possible
numbers for nonzero edge circulation.
\end{remark}

For three particles the phenomenon is visible in exact arithmetic. On
the triangle, the stationary and fixation equations can be solved
completely, and at $k=(1,2,4)$ they give
\begin{equation}\label{eq:three-witness-intro}
 \Delta_3(1,2,4)=\frac{421413160}{139117744582101}>0.
\end{equation}
The full solution is given in Appendix~\ref{app:three}. For larger
populations the proof proceeds differently, by a perturbation of
constant killing whose first nonzero term has degree three; see
Section~\ref{sec:idea}.

The population threshold is connected to the nonexplosion problem for
\FV\ systems. This asks whether the successive branching times can
accumulate in finite time. Kwa\'{s}nicki~\citep{Kwasnicki2024} proved
nonexplosion for two particles driven by a symmetric Hunt process with
finite reference measure, for almost every initial configuration. He
explicitly raised the question for more than two particles in
\citep[Remark~1.2]{Kwasnicki2024}.
Using his Green-kernel invariant measure, we derive the pairwise balance
identity \eqref{eq:two-balance-main}. This gives a finite-population
counterpart of the symmetry behind the two-particle result. Our theorem
settles the corresponding transport question:
universal ancestral balance ends at two particles, and circulation is
generic beyond that threshold on the fixed three-state graph.
Proposition~\ref{prop:rare-success} turns this circulation into a
directional bias for migration and fixation.

\FV\ particle systems of the present type were introduced in the study of interacting diffusions
\citep{BurdzyHolystIngermanMarch1996,BurdzyHolystMarch2000} and are used
to approximate laws conditioned on survival
\citep{FerrariMaric2007,Villemonais2014}. Their genealogy gives a
surviving lineage, or spine, whose law has been studied at fixed
population size and in the large-population limit
\citep{BieniekBurdzy2018,BurdzyEnglander2024}.
Tagged-particle limits and scaling limits of inherited genetic information
have been established in \citep{GrigorescuKang2006} and
\citep{Tough2023}, respectively.
For two Brownian particles started at the same point on the positive
half-line, the spine and Brownian motion conditioned to stay positive
have mutually singular laws
on infinite-time path space
\citep{BurdzyTadic2024}. The same separation occurs in a bounded interval
\citep{BurdzyEnglanderMarshall2025}.

The approximation of conditioned laws has also been studied
quantitatively. In finite spaces, stationary population laws admit laws
of large numbers and Gaussian fluctuations
\citep{AsselahFerrariGroisman2011,LelievrePillaudVivienReygner2018}.
C\'{e}rou, Delyon, Guyader and Rousset~\citep{CerouDelyonGuyaderRousset2020}
proved a central limit theorem at a fixed time, including hard killing.
Quantitative mixing and uniform in time approximation estimates have
been obtained for discrete systems under a contraction condition
\citep{CloezThai2016Quantitative} and for hard-killing diffusions in a
metastable regime \citep{JournelMonmarche2025}.
Cloez and Corujo~\citep{CloezCorujo2022} proved uniform in time
propagation of chaos for Moran models with additive selection under
exponential stability of the normalized semigroup.
A recent preprint by Cardaliaguet, Cirant, Jackson and
Souganidis~\citep{CardaliaguetCirantJacksonSouganidis2026} establishes
uniform in time weak convergence for Brownian systems with hard killing
in smooth bounded domains.
Here the fixation probabilities allow us to extract the spatial current
of surviving labels directly from the stationary population.

Section~\ref{sec:idea} explains the proof of Theorem~\ref{thm:main}.
We develop the fixation equations in Section~\ref{sec:founder} and
prove conservation and two-particle balance in
Section~\ref{sec:balance}. Section~\ref{sec:cubic} computes the
three-state current and completes the main proof.
The rare-transplant interpretation is given in
Section~\ref{sec:transplants}. Section~\ref{sec:discussion} states the
open multiparticle question precisely and describes its relation to the
finite theory.
Appendices~\ref{app:three}--\ref{app:estimates},
following the references, contain the explicit three-particle solution,
the perturbation calculations, and the finite estimates for transplants.

\section{Idea of the proof}\label{sec:idea}

We first describe the balance mechanism for two particles. Let
$A=\diag(k)-Q$, let $P_t=e^{-tA}$ be the killed motion semigroup, and
write $p_t=P_t\1$ for its survival probabilities. Reversibility makes
the Green kernel $G=A^{-1}$ symmetric with respect to $\nu$. The
stationary law of the ordered pair is
\[
 \mu(x,y)=\frac{\nu_xG(x,y)}{Z},\qquad
 Z=\sum_x\nu_x(G\1)(x).
\]
This is Kwa\'{s}nicki's invariant measure
\citep[Theorem~1.5]{Kwasnicki2024}. The label of one particle survives
exactly when the other particle is killed first. Expressing this
probability with $P_t$ and averaging against $\mu$ gives
\[
 F_{ij}^{(2)}=\frac{2\nu_i}{Z}
                 \int_0^\infty p_t(i)p_t(j)\,dt.
\]
Indeed, if $\ell_t=P_tk$ denotes the lifetime density, then
$G\ell_t=P_tGk=P_t\1=p_t$. The remaining product is symmetric in $i,j$.
This proves the balance identity. Section~\ref{sec:balance} gives the
calculation.

For three particles we can solve the stationary and fixation equations
on the triangle explicitly. At $k=(1,2,4)$ there are ten population
count states and eighteen states for a distinguished particle and the
other two particles. The solution gives the explicit value
\eqref{eq:three-witness-intro}. The equations and all their solution
values are given in Appendix~\ref{app:three}. Conservation then
identifies this positive difference with circulation around
$0\to2\to1\to0$.

For larger populations the size of a direct stationary system grows
with $N$. We instead perturb constant killing, putting
$k=(1-\eps,1,1+\eps)$, and compute the first nonzero term of the current.
Both the fixation probabilities and the stationary law vary with
$\eps$. Their linear contributions cancel. Reflection of sites $0$ and
$2$ makes the current odd in $\eps$, and the cubic term is
\begin{equation}\label{eq:cubic-intro}
\begin{split}
 \Delta_N(1-\eps,1,1+\eps)&=-C_N\eps^3+O(\eps^5),\\
 C_N&=\frac{2(N-3)(N-2)(N-1)^2}
 {9N(3N-2)^2(3N-1)^2(3N+1)},\qquad N\ge4.
\end{split}
\end{equation}
In particular, $C_N>0$ for every $N\ge4$.

The useful feature of this calculation is that its degree stays bounded.
The first three fixation coefficients are polynomials of degree at most
three in the counts of the other particles. For the stationary response,
the neutral generator preserves polynomial degree and acts by a nonzero
scalar on each highest homogeneous part of positive degree. We solve the resulting Poisson
equations by descending through the degrees. A perturbation raises the
degree by at most one, so all terms in the cubic current reduce to
neutral moments of degree at most four. This yields the formula in
\eqref{eq:cubic-intro} for every $N\ge4$, using the same calculation.

Finally, the stationary probabilities and the fixation probabilities are
rational functions of the killing rates. Thus a single nonzero value for
a fixed $N$ makes the zero set of $\Delta_N$ a proper algebraic set.
Equation~\eqref{eq:three-witness-intro} provides that value for $N=3$,
and \eqref{eq:cubic-intro} provides it for every $N\ge4$. Taking the
countable intersection of the full-measure nonzero sets gives a common
killing vector for all these population sizes.

\section{Fixation probabilities and the founder equation}\label{sec:founder}

In this section we derive a linear equation for the founder probability
$h_N$ and show that its solution depends rationally on the rates. We begin
with the labelled construction, which also gives the fixation estimates
used below.

Number the particle slots from $1$ to $N$. A particle in slot $p$ at $i$
moves to $j$ at rate $q(i,j)$. For each ordered pair of distinct slots
$(p,r)$, slot $p$ copies the position and label of slot $r$ at rate
$k_i/(N-1)$, where $i$ is the site of slot $p$. This includes copies between
particles at the same site with different labels. The position process,
the count process and the colour processes are projections of this marked
process. Its total event rate is bounded by
\begin{equation}\label{eq:event-bound}
 N\max_{i\in I}\left(\sum_{j\ne i}q(i,j)+k_i\right).
\end{equation}
Uniformization gives a nonexplosive process. Natural motion connects all
slot position configurations, so both the position chain and the count
chain are irreducible.
The count generator on $\Omega_N$ has off-diagonal rates
\begin{equation}\label{eq:population-generator}
 L_N(c,c-e_i+e_j)=c_iq(i,j)+\frac{c_ic_jk_i}{N-1},\qquad i\ne j.
\end{equation}

\begin{lemma}\label{lem:fixation}
Every finite initial collection of labels almost surely fixes to one
label. For fixed $N$, on every compact set of positive rates with a fixed
connected motion support, the fixation time $\Theta$ satisfies
\begin{equation}\label{eq:fixation-tail}
 \sup\Prob(\Theta>r)\le(1-p)^r,\qquad r\in\N_0,
\end{equation}
for a constant $p>0$, uniformly over the initial labelled configurations
and the parameters. In particular, $\sup\E\Theta\le1/p$.
\end{lemma}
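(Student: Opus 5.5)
The plan is a uniform Doeblin-type argument on unit time intervals. Work with the labelled slot process described above. Its state is the slot positions together with a partition of the slots into label classes, and this state space is finite for fixed $N$. Fixation means the partition has a single block, and that state is absorbing. The key claim is that there is a $p>0$ such that, from every labelled configuration and for every parameter in the compact set, the process reaches fixation within one unit of time with probability at least $p$. The Markov property at integer times then gives $\Prob(\Theta>r+1)\le(1-p)\Prob(\Theta>r)$, hence \eqref{eq:fixation-tail} by induction. The moment bound follows from $\E\Theta\le\sum_{r\ge0}\Prob(\Theta>r)\le 1/p$, and almost-sure fixation follows by letting $r\to\infty$. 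A finite initial collection of labels is covered because only the induced partition matters.

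To produce $p$, I would exhibit one explicit event. Pick slot $1$. In the time interval $[0,1]$, require the following:
\begin{itemize}
\item slot $2$ copies slot $1$ during $[0,1/(N-1)]$;
\item slot $3$ copies slot $1$ during the next subinterval of length $1/(N-1)$, and so on through slot $N$;
\item no other event of any kind occurs in $[0,1]$.
\end{itemize}
On this event all labels equal slot $1$'s label by time $1$, whatever the initial positions. No motion is needed, since copying is allowed between any two slots, including slots at different sites. The copy of slot $r$ onto slot $p$ occurs at rate $k_{x_p}/(N-1)\ge \underline k/(N-1)$, where $\underline k>0$ is the minimum killing rate over the compact set. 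Every slot's total event rate is at most the bound $M$ in \eqref{eq:event-bound}, and $M$ is uniformly bounded on the compact set. Consider the competing exponential clocks on each subinterval. The probability that the prescribed copy fires and nothing else fires is bounded below by an explicit constant of the form $\bigl(\tfrac{\underline k}{N-1}\tfrac{1}{N-1}e^{-M}\bigr)^{N-1}$, or something comparable. Taking $p$ to be this product gives a bound independent of the initial configuration and the parameters.

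The only step requiring care is the uniformity. It comes from two facts:
\begin{itemize}
\item rates that are positive and continuous on a compact set have positive minima and finite maxima;
\item the lower bound uses only killing rates and the uniform total-rate bound.
\end{itemize}
So the fixed connected motion support hypothesis is not actually needed for this estimate; it matters only for the irreducibility statements elsewhere. I would also check the convention that $\Theta$ is measured so that \eqref{eq:fixation-tail} holds at $r=0$; this is trivially true there. With that, the lemma follows.
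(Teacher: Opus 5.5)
Your proof is correct and follows the paper's own argument: the paper also has the other $N-1$ slots copy one chosen slot in a prescribed order within a unit time interval. It bounds that probability below uniformly using the minimum killing rate and the total-rate bound \eqref{eq:event-bound} on the compact set, and then iterates with the Markov property at integer times.

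The only difference is how the constant is computed. The paper uses uniformization at rate $\Lambda$ with exactly $N-1$ ticks, each being the prescribed copy, whereas you use competing exponential clocks on subintervals. For your version, what you need is that the total rate of all events in the system is at most $M$, not just each slot's rate, and this is exactly what \eqref{eq:event-bound} provides.
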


\begin{proof}
Choose a slot and prescribe that each of the other $N-1$ slots copies it
in a fixed order. After these events its label occupies every slot. On the
compact parameter set, choose a lower bound $k_*>0$ for the killing rates
and a uniformization rate $\Lambda$ greater than the bound in
\eqref{eq:event-bound}. Each prescribed copy has probability at least
$k_*/((N-1)\Lambda)$ at a tick. The probability of exactly $N-1$ ticks
in a unit interval, with these copies in the prescribed order, is at least
\[
 p=e^{-\Lambda}\frac{\Lambda^{N-1}}{(N-1)!}
 \left(\frac{k_*}{(N-1)\Lambda}\right)^{N-1}>0.
\]
Applying the Markov property successively at integer times proves
\eqref{eq:fixation-tail}. The expectation bound follows by summation.
The argument applies to any initial assignment of labels or colours.
\end{proof}

Assign a distinct label to each particle in a configuration $c$. By
Lemma~\ref{lem:fixation}, exactly one of these labels fixes, and hence
\begin{equation}\label{eq:founder-normalization}
 \sum_{i:c_i>0}c_i h_N(i,c-e_i)=1.
\end{equation}
If several labels have the same colour $A$, the corresponding fixation
events are disjoint. The probability that colour $A$ fixes is therefore
\begin{equation}\label{eq:colour-additivity}
 \mathcal F_A=\sum_{p:\,\mathrm{colour}(p)=A}
 h_N(x_p,c-e_{x_p}),
\end{equation}
where $x_p$ denotes the site of slot $p$.

Let $\mathcal S_N=\{(i,d):i\in I,\ d\in\N_0^I,\ |d|=N-1\}$. On
functions $h:\mathcal S_N\to\R$, define
\begin{equation}\label{eq:founder-operator}
\begin{aligned}
 (T_Nh)(i,d)={}&\sum_{j\ne i}q(i,j)\bigl(h(j,d)-h(i,d)\bigr)\\
 &+\sum_a d_a\sum_{b\ne a}q(a,b)
 \bigl(h(i,d-e_a+e_b)-h(i,d)\bigr)-k_i h(i,d)\\
 &+\sum_a d_ak_a\left[\frac{2}{N-1}h(i,d-e_a+e_i)\right.\\
 &\hspace{21mm}\left.+\sum_b\frac{d_b-\1_{\{a=b\}}}{N-1}
 h(i,d-e_a+e_b)-h(i,d)\right].
\end{aligned}
\end{equation}
Terms multiplied by $d_a=0$ are omitted. A term with
$d_b-\1_{\{a=b\}}=0$ is also omitted before its argument is evaluated.

Here the first two lines account for natural motion and death of the
distinguished particle. If another particle copies it, the distinguished
label has two descendants at $i$; by \eqref{eq:colour-additivity}, their
contributions add. This gives the factor $2/(N-1)$. There are
$d_b-\1_{\{a=b\}}$ possible donors at $b$ among the remaining particles.

Let $\mathcal G_N$ denote the generator of the coloured process. Summing
the contribution of each event to the colour sum of a function $h$ gives
\begin{equation}\label{eq:colour-operator}
 \mathcal G_N\!\left[\sum_{p:\,\mathrm{colour}(p)=A}
 h(x_p,c-e_{x_p})\right]
 =\sum_{p:\,\mathrm{colour}(p)=A}(T_Nh)(x_p,c-e_{x_p}).
\end{equation}
The fixation probability is harmonic. Applying
\eqref{eq:colour-operator} in a state with a single particle of colour $A$
therefore gives $T_Nh_N=0$.

\begin{proposition}\label{prop:founder-unique}
For any $i_0\in I$, the linear system
\begin{equation}\label{eq:founder-anchor}
 T_Nh=0,\qquad h(i_0,(N-1)e_{i_0})=1/N
\end{equation}
has the unique solution $h=h_N$.
\end{proposition}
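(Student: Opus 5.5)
We need two things: that $h_N$ satisfies the anchored system, and that the system has no other solution. The first part is nearly done. Applying \eqref{eq:colour-operator} in a state with a single particle of colour $A$ gives $T_Nh_N=0$. At the anchor configuration all $N$ particles sit at $i_0$. By symmetry of the slots, each of the $N$ labels has the same fixation probability. By \eqref{eq:founder-normalization} these probabilities sum to one, so $h_N(i_0,(N-1)e_{i_0})=1/N$.

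For uniqueness it suffices to show that the kernel of $T_N$ is one-dimensional, spanned by $h_N$. Since $h_N(i_0,(N-1)e_{i_0})=1/N\neq0$, the anchor then pins down the multiple. So let $g$ satisfy $T_Ng=0$. The plan is to lift $g$ to a function on the coloured process and use martingale arguments. Fix a labelled configuration with an arbitrary colouring, and define
\[
 \Phi_A=\sum_{p:\,\mathrm{colour}(p)=A} g(x_p,c-e_{x_p}).
\]
By \eqref{eq:colour-operator}, $\mathcal G_N\Phi_A=0$. The state space is finite, so $\Phi_A$ is bounded and $\Phi_A(X_t)$ is a martingale. By Lemma~\ref{lem:fixation}, $\Theta$ is a.s.\ finite with finite mean. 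Optional stopping at $\Theta$ gives
\[
 \Phi_A(X_0)=\E\,\Phi_A(X_\Theta).
\]
At time $\Theta$ all particles share one colour, so $\Phi_A(X_\Theta)$ equals $\1_{\{A\text{ fixes}\}}\,S(c_\Theta)$, where
\[
 S(c)=\sum_{i}c_i\, g(i,c-e_i)
\]
is the total $g$-mass of a configuration.

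The key step is to show that $S$ is constant on $\Omega_N$. Take every particle to have colour $A$. Then $\Phi_A=S(c)$, and the computation above shows that $S$ is harmonic for the count chain $L_N$: when all particles share one colour, $\mathcal G_N$ acting on $\Phi_A$ reduces to $L_N$ acting on $S$. The count chain is irreducible and finite, so a harmonic function must be constant; call the constant $s$. Now colour a single particle, at site $i$ with the others at $d$, as $A$. Then
\[
 g(i,d)=\E\bigl[\1_{\{A\text{ fixes}\}}\,s\bigr]=s\,h_N(i,d).
\]
Hence $g=s\,h_N$, and the anchor condition forces $s=1$.

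The main obstacle is checking that \eqref{eq:colour-operator} holds for an arbitrary function $g$, not only for $h_N$. This includes the step where the all-one-colour sum reduces to $L_N$ acting on $S$. One must verify that the copying terms, especially the factor $2/(N-1)$ and the donor counts $d_b-\1_{\{a=b\}}$, reproduce exactly the change in the colour sum event by event. This is bookkeeping: for each copy event, one tracks which summands change and which are created or destroyed. I would carry it out directly from the slot construction, treating separately copies in which the copier or the donor has colour $A$.
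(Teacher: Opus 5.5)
Your argument is correct and is essentially the paper's proof: you show that the total mass $\sum_i c_i g(i,c-e_i)$ is $L_N$-harmonic and hence constant, then identify the colour sum of $g$ at a single-mutant state with that constant times the fixation probability. The differences are cosmetic. You use optional stopping at $\Theta$ where the paper invokes uniqueness for the Dirichlet problem via $(-B_N)^{-1}=\int_0^\infty e^{tB_N}\,dt$, and you impose the anchor last (obtaining $\ker T_N=\spanof\{h_N\}$ on the way) rather than first normalizing the total mass to $1$.
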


\begin{proof}
Exchangeability of the slots and \eqref{eq:founder-normalization} show
that $h_N(i_0,(N-1)e_{i_0})=1/N$, so $h_N$ solves
\eqref{eq:founder-anchor}. Conversely, suppose that $h$ solves this
system and put $H(c)=\sum_i c_i h(i,c-e_i)$. With all particles coloured
$A$, equation~\eqref{eq:colour-operator} gives $L_NH=0$. Since the count
chain is finite and irreducible, $H$ is constant. At $c=Ne_{i_0}$ the
anchor gives $H(c)=1$, and thus $H=1$.

It follows from \eqref{eq:colour-operator} that the colour sum of $h$ is
harmonic in the mixed two-colour states, with boundary values zero at
extinction and one at fixation. By Lemma~\ref{lem:fixation}, the generator
$B_N$ restricted to the mixed states satisfies $e^{tB_N}\to0$ and
$(-B_N)^{-1}=\int_0^\infty e^{tB_N}\,dt$. This Dirichlet problem has a
unique solution, so the colour sum of $h$ equals the fixation probability.
Taking a state with a single particle of colour $A$ gives $h=h_N$.
\end{proof}

The same argument shows $\ker T_N=\spanof\{h_N\}$.
At constant killing, $T_N$ is an irreducible Markov generator. For general
killing its action on the constant function is
\begin{equation}\label{eq:founder-row-sum}
 (T_N\1)(i,d)=-k_i+\frac{1}{N-1}\sum_a d_ak_a.
\end{equation}
The normalization in \eqref{eq:founder-anchor} fixes the scale of its solution.

Fix a connected undirected support $E$ and weights $\nu_i>0$. Write
$q(i,j)=w_{ij}/\nu_i$ on $E$, with $w_{ij}=w_{ji}>0$, and set $q(i,j)=0$
off $E$. The conductances $w$ and killing vector $k$ range over
\begin{equation}\label{eq:parameter-domain}
 \mathcal P=(0,\infty)^E\times(0,\infty)^I.
\end{equation}

\begin{proposition}\label{prop:rationality}
For fixed $N$, the coordinates of $\pi_N$, $h_N$ and $F_{ij}^{(N)}$ are
rational functions on $\mathcal P$, each with a polynomial denominator
that has no zero in $\mathcal P$. They are real analytic there. The same
conclusion holds on the positive killing domain with the motion fixed.
\end{proposition}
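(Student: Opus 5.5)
The plan is to express $\pi_N$ and $h_N$ as the unique solutions of nonsingular linear systems whose coefficients are polynomials in $(w,k)$ and $1/\nu$, with $\nu$ fixed. Cramer's rule then gives rationality. The real work is to produce denominators with no zero in $\mathcal P$. Rationality of $F^{(N)}_{ij}$ then follows from \eqref{eq:transport}, which is a finite sum of products of coordinates of $\pi_N$ and $h_N$.

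\emph{Stationary law.} The count generator $L_N$ in \eqref{eq:population-generator} has off-diagonal entries that are polynomials in the parameters and positive on $\mathcal P$. The chain is irreducible for every parameter in $\mathcal P$, since motion alone connects $\Omega_N$ once the support $E$ is connected. By the Markov chain tree theorem, $\pi_N(c)=\tau(c)/\sum_{c'}\tau(c')$, where $\tau(c)$ is the sum over spanning arborescences rooted at $c$ of the products of their edge rates. Each $\tau(c)$ is a polynomial with nonnegative coefficients. It is strictly positive on $\mathcal P$, because irreducibility guarantees at least one arborescence with all rates positive. Hence the denominator $\sum_{c'}\tau(c')$ has no zero in $\mathcal P$.

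\emph{Founder probabilities.} By Proposition~\ref{prop:founder-unique} and the remark $\ker T_N=\spanof\{h_N\}$, the matrix of $T_N$ has corank one at every point of $\mathcal P$. Replacing one row of $T_Nh=0$, chosen suitably, by the anchor equation $h(i_0,(N-1)e_{i_0})=1/N$ gives a square system with polynomial coefficients and $h_N$ as its solution. However, a single row choice might make the determinant vanish at some parameters.

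To avoid this, I would use a cleaner route through the Dirichlet problem in the proof of Proposition~\ref{prop:founder-unique}. Take a state with one particle of colour $A$ at $i$ and the rest $d$. Then $h_N(i,d)$ is the $A$-fixation probability, which solves $B_Nu=-g$. Here $B_N$ is the generator of the two-colour chain restricted to mixed states, and $g$ is the rate into the all-$A$ states. Lemma~\ref{lem:fixation} shows that $-B_N$ is invertible at every parameter, so $\det(-B_N)$ is a polynomial with no zero in $\mathcal P$. Cramer's rule then gives each coordinate of $h_N$ as a polynomial divided by $\det(-B_N)$. If a positivity-preserving form is preferred, the tree theorem for absorbing chains (forests rooted at the absorbing states) shows that this determinant is a positive polynomial. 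Either way the denominator is nonvanishing. The only step needing care is that the invertibility from Lemma~\ref{lem:fixation} holds at every parameter, not just generically. The lemma gives this, since its $p>0$ is valid on each compact subset of $\mathcal P$.

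\emph{Conclusion.} $F^{(N)}_{ij}$ is a finite sum of products of these rational functions, with denominator $\sum\tau\cdot\det(-B_N)$, which has no zero in $\mathcal P$. A rational function whose denominator does not vanish is real analytic on that open set. Fixing the motion amounts to restricting to a slice $\{w\}\times(0,\infty)^I$. Restriction preserves both properties, because the denominators remain nonzero there. The main obstacle is the nonvanishing of the denominators on all of $\mathcal P$. I would handle it uniformly by the invertibility or irreducibility arguments above, rather than by a choice of row that might degenerate.
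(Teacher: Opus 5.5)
Your proof is correct. For $h_N$ and $F^{(N)}_{ij}$ it is the paper's argument. The paper also works without $T_N$: it writes $H_N=(-B_N)^{-1}r_N$ for the two-colour count chain absorbed at $|u|\in\{0,N\}$, takes invertibility of $B_N$ at every parameter from Lemma~\ref{lem:fixation}, and applies Cramer's rule.

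The difference is in the stationary law. You use the Markov chain tree theorem. This gives a denominator $\sum_{c'}\tau(c')$ with nonnegative coefficients that is strictly positive on $\mathcal P$, and positivity of $\pi_N$ as a by-product, at the cost of importing that theorem. (Only transitions allowed by $E$ or by copying have positive rates, not every off-diagonal entry, but your arborescence argument uses only irreducibility.) The paper instead replaces one row of $L_N^{\mathsf T}$ by a row of ones. A kernel vector of this bordered matrix has mass zero, and it satisfies the omitted equation because $L_N\1=0$. So it lies in $\ker L_N^{\mathsf T}=\spanof\{\pi_N\}$ and vanishes. This needs only uniqueness of the stationary law, and yields a determinant with no zero in $\mathcal P$ without exhibiting its sign.

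The same mechanism shows that the degeneracy you feared for $T_N$ cannot occur. Deleting a row of a corank-one square matrix preserves its rank exactly when the left null vector is nonzero at that row. The left null vector of $T_N$ is $\ell(i,d)=(d_i+1)\pi_N(d+e_i)>0$. Indeed, \eqref{eq:colour-operator} with all particles of one colour gives $\sum_{(i,d)}\ell(i,d)(T_Nh)(i,d)=\pi_NL_NH=0$ for every $h$, where $H(c)=\sum_ic_ih(i,c-e_i)$. Hence replacing any row of $T_N$ by the anchor equation gives a matrix invertible throughout $\mathcal P$, so your first route would also have worked. The Dirichlet route you chose is equally valid.
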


\begin{proof}
The entries of $L_N$ are polynomial in $(w,k)$. Replace one row of
$L_N^{\mathsf T}$ by a row of ones. A vector in the kernel of the resulting
matrix has total mass zero. The omitted equation follows from the zero
row sums of $L_N$, so the vector also belongs to
$\ker L_N^{\mathsf T}=\spanof\{\pi_N\}$ and is therefore zero. The matrix
is invertible on $\mathcal P$, and Cramer's rule gives rational expressions
for the coordinates of $\pi_N$.

For $h_N$, consider the two-colour count chain with state
$(u,v)\in\N_0^I\times\N_0^I$, where $|u|+|v|=N$. Index its counts $z$
by categories $r$ consisting of a colour and a site. For distinct
categories $r,s$, the rate of changing one particle from $r$ to $s$ is
\begin{equation}\label{eq:two-colour-rates}
 z_r\1_{\{\mathrm{colour}(r)=\mathrm{colour}(s)\}}
 q(\mathrm{site}(r),\mathrm{site}(s))
 +\frac{z_rz_s}{N-1}k_{\mathrm{site}(r)},
\end{equation}
where we set $q(i,i)=0$ in this formula. In particular, copies between
different colours at the same site are included. Make the boundary sets
$|u|=0,N$ absorbing. The restriction $B_N$ to the mixed states and the
rate vector $r_N$ to $|u|=N$ have polynomial entries. By
Lemma~\ref{lem:fixation}, $B_N$ is invertible, and the fixation
probabilities are
\begin{equation}\label{eq:dirichlet-inverse}
 H_N=(-B_N)^{-1}r_N,
 \qquad h_N(i,d)=H_N(e_i,d).
\end{equation}
Cramer's rule gives the required rational expressions for $h_N$.
Formula~\eqref{eq:transport} gives the same conclusion for $F_{ij}^{(N)}$.
The determinant denominators have no zeros on $\mathcal P$, so these
functions are real analytic there. With the motion rates fixed, the same
argument applies to the positive killing domain.
\end{proof}

\section{Conservation and two-particle balance}\label{sec:balance}

We first show that the ancestral current has zero divergence. We then
derive the stronger pairwise balance formula for two particles.

\begin{proposition}\label{prop:conservation}
For $\varphi:I\to\R$, set
\[
 g_\varphi(c)=\sum_{i:c_i>0}c_i h_N(i,c-e_i)\varphi(i).
\]
Then
\begin{equation}\label{eq:conservation-generator}
 (L_Ng_\varphi)(c)=\sum_{i\ne j}c_iq(i,j)h_N(j,c-e_i)
 \bigl(\varphi(j)-\varphi(i)\bigr).
\end{equation}
Consequently $\sum_{j\ne i}J_{ij}^{(N)}=0$ at every site.
\end{proposition}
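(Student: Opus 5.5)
Write $g_\varphi(c)=\sum_p \varphi(x_p)h_N(x_p,c-e_{x_p})$, a sum over all slots $p$. Give each slot its own colour, so that $h_N(x_p,c-e_{x_p})$ is the fixation probability of colour $p$. Then $g_\varphi$ is the expectation of $\varphi$ evaluated at the original site of the eventual founder slot. This reading suggests the computation below, but I will carry it out directly with the operator $T_N$.

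I would apply \eqref{eq:colour-operator} to each singleton colour separately. For a fixed slot $p$, the event types split into two groups.

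First, events that do not move slot $p$'s lineage: motion of other particles, and copying among other particles or onto slot $p$'s position. For these, the factor $\varphi(x_p)$ stays attached to the descendants of $p$, since copies onto $p$'s site inherit the same site. The colour-sum identity therefore gives exactly $\varphi(x_p)$ times the corresponding part of $T_Nh_N$.

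Second, events that change the site of a descendant: natural motion of slot $p$ from $i$ to $j$. This changes $\varphi(i)h_N(i,\cdot)$ to $\varphi(j)h_N(j,\cdot)$. Relative to $\varphi(i)\times$ (the motion term of $T_N$), the discrepancy is $q(i,j)h_N(j,c-e_i)(\varphi(j)-\varphi(i))$.

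Death of slot $p$ needs care. When $p$ is killed it copies a donor $r$ at site $b$, and $p$'s own label term disappears. The donor's label then has two copies, one at $b$ and one now at $b$ as well. The $\varphi$ weights attached are therefore $\varphi(b)$ for both, which is consistent with the $T_N$ bookkeeping for slot $r$. The term lost is $-k_i\varphi(i)h_N$, matching $\varphi(i)$ times the $-k_ih$ term. So deaths produce no discrepancy.

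More cleanly, I would compute $L_Ng_\varphi=\sum_p[\varphi(x_p)(T_Nh_N)(x_p,\cdot)+\text{motion correction}]$. This uses the fact that the generator acts linearly and that the only events moving a label-carrying particle to a different site without copying are natural jumps. Since $T_Nh_N=0$, summing over slots grouped by site gives \eqref{eq:conservation-generator}.

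The main obstacle is verifying that copying events introduce no $\varphi$-discrepancy. The point to check is that a copy places the new particle at the donor's site, so each descendant's weight $\varphi(\text{site})$ agrees with its $h_N$ argument. I would check this against the three bracketed terms of \eqref{eq:founder-operator}, noting especially the factor $2/(N-1)$.

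For the consequence, take the expectation under $\pi_N$. Stationarity gives $\sum_c\pi_N(c)(L_Ng_\varphi)(c)=0$. By \eqref{eq:transport}, the right side of \eqref{eq:conservation-generator} averages to $\sum_{i\ne j}q(i,j)F^{(N)}_{ij}(\varphi(j)-\varphi(i))=0$. Choose $\varphi=\1_{\{m\}}$. This yields $\sum_{i\ne m}q(i,m)F_{im}-\sum_{j\ne m}q(m,j)F_{mj}=0$, that is, $\sum_{j\ne m}J^{(N)}_{mj}=0$.
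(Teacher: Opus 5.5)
Your proposal is correct and follows essentially the paper's argument. The paper also weights each label by $\varphi$ of its current site and holds these weights fixed across one event, so that harmonicity of each label's fixation probability kills the drift; this is your per-slot use of $T_Nh_N=0$. It then observes, as you do, that copying creates no discrepancy while a natural jump $i\to j$ creates exactly $h_N(j,c-e_i)(\varphi(j)-\varphi(i))$, and it obtains the divergence identity by averaging against $\pi_N$ with $\varphi=\1_{\{m\}}$.
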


\begin{proof}
Give the particles distinct labels and assign to each label the weight
$\varphi$ of its present site. Keep these weights fixed during the next
event. By harmonicity and \eqref{eq:colour-additivity}, the sum of the
fixation probabilities of the old labels, multiplied by their fixed
weights, has zero infinitesimal drift.

After a copying event, the victim label is absent and both descendants of
the donor are at its old site. The fixed-weight sum is therefore equal
to $g_\varphi$ in the new configuration, including when the victim and
donor start at the same site. After a natural jump $i\to j$, the difference
between the new $g_\varphi$ and the fixed-weight sum is
$h_N(j,c-e_i)(\varphi(j)-\varphi(i))$. Multiplication by the jump rates
and summation give \eqref{eq:conservation-generator}.

Integrate \eqref{eq:conservation-generator} against $\pi_N$. The left
side vanishes, and the right side is
$-\sum_i\varphi(i)\sum_{j\ne i}J_{ij}^{(N)}$. Taking
$\varphi=\1_{\{i\}}$ proves the divergence identity at $i$.
\end{proof}

For a subset $A\subset I$, summing the divergence identity over $A$ gives
\begin{equation}\label{eq:cut-conservation}
 \sum_{i\in A,\,j\notin A}J_{ij}^{(N)}=0.
\end{equation}
Thus the current belongs to the cycle space, the space of antisymmetric
edge functions with zero divergence. If an edge is a bridge, choose $A$
to be one of the components obtained by removing it. Equation~\eqref{eq:cut-conservation}
then gives zero current on that edge. Every edge of a tree has zero
current, and on the triangle \eqref{eq:triangle} the divergence identity
gives \eqref{eq:triangle-orientation}.

For distinct sites $\alpha,\beta$, choose positive port rates $a,b$ with
\begin{equation}\label{eq:balanced-ports}
 \nu_\alpha a=\nu_\beta b.
\end{equation}
The balanced transplant flux is
\begin{equation}\label{eq:port-flux}
 \Delta_N=aF_{\alpha\beta}^{(N)}-bF_{\beta\alpha}^{(N)}.
\end{equation}
When the sites are joined by a motion edge, reversibility and
\eqref{eq:balanced-ports} give
$a/q(\alpha,\beta)=b/q(\beta,\alpha)$. Multiplying
$J_{\alpha\beta}^{(N)}$ by this common factor gives $\Delta_N$.
Balanced ports on a bridge consequently have zero flux for every $N$.
For nonadjacent sites, \eqref{eq:port-flux} uses the coefficients defined
in \eqref{eq:transport}.

For two particles we obtain a formula valid for every pair of sites.
Write
\begin{equation}\label{eq:green-definitions}
\begin{gathered}
 A=\diag(k)-Q,\qquad P_t=e^{-tA},\qquad G=A^{-1},\\
 p_t=P_t\1,\qquad \ell_t=P_tk,\qquad
 Z=\sum_x\nu_x(G\1)(x).
\end{gathered}
\end{equation}
Here $p_t(x)$ is the survival probability of one killed particle started
at $x$, and $\ell_t(x)$ is its lifetime density.

\begin{theorem}\label{thm:green-balance}
For all $i,j\in I$,
\begin{equation}\label{eq:green-flux}
 F_{ij}^{(2)}=\frac{2\nu_i}{Z}
 \int_0^\infty p_t(i)p_t(j)\,dt.
\end{equation}
Hence every balanced port pair has $\Delta_2=0$.
\end{theorem}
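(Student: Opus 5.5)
The plan follows the outline in Section~\ref{sec:idea}: identify the stationary law of two particles, compute $h_2$ through the first killing time, and combine the two with the Green kernel.

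\textbf{Step 1: the stationary law.} For $N=2$ the count chain on $\Omega_2$ is the projection of the ordered pair chain $(X,Y)$. In this chain each coordinate moves by $Q$. A particle at $x$ is killed at rate $k_x$ and jumps onto the other particle. We claim that $\mu(x,y)=\nu_xG(x,y)/Z$ is invariant; this is Kwa\'{s}nicki's measure \citep[Theorem~1.5]{Kwasnicki2024}. To check it directly, one needs $\mu(Q\otimes I+I\otimes Q)$ plus the killing terms to vanish. The motion part of the generator acting on $\nu_xG(x,y)$ gives $-\nu_x\bigl((\diag k)G\bigr)(x,y)-\nu_x G(x,y)k_y+2\nu_x\delta_{xy}$, using $AG=I$ and the symmetry $\nu_xG(x,y)=\nu_yG(y,x)$. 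Killing of either particle removes mass at rates $k_x+k_y$. It deposits mass on the diagonal at rate $\sum_x\nu_xG(x,y)k_x+\dots=\nu_y(Gk)(y)\cdot 2=2\nu_y$, since $Gk=\1$ by $A\1=k$. These terms cancel. Hence $\pi_2(c)$ is obtained by summing $\mu$ over orderings. Since $F^{(2)}$ in \eqref{eq:transport} is an average of $c_i$ times a function of the distinguished particle at $i$, we get
\[
F^{(2)}_{ij}=2\sum_y\mu(i,y)h_2(j,e_y).
\]

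\textbf{Step 2: the fixation probability.} With two particles, the label of a particle at $j$, whose partner is at $y$, fixes exactly when the partner is killed first. The two particles move independently until the first kill, so
\[
h_2(j,e_y)=\int_0^\infty p_t(j)\,\ell_t(y)\,dt,
\]
where $\ell_t=P_tk$ is the lifetime density. The copying term with donor at the same site does not matter, because the first death decides the outcome. One may also verify this formula against Proposition~\ref{prop:founder-unique}.

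\textbf{Step 3: combining.} Substituting gives
\[
F^{(2)}_{ij}=\frac{2\nu_i}{Z}\int_0^\infty p_t(j)\sum_yG(i,y)\ell_t(y)\,dt .
\]
Here $G\ell_t=GP_tk=P_tGk=P_t\1=p_t$, because $G$ commutes with $P_t$. This yields \eqref{eq:green-flux}. Then
\[
\frac{F^{(2)}_{ij}}{\nu_i}=\frac{2}{Z}\int_0^\infty p_t(i)p_t(j)\,dt
\]
is symmetric in $i$ and $j$. For balanced ports \eqref{eq:balanced-ports}, $aF_{\alpha\beta}^{(2)}=a\nu_\alpha S=b\nu_\beta S=bF_{\beta\alpha}^{(2)}$, where $S$ denotes the common symmetric integral. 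Hence $\Delta_2=0$.

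\textbf{Main obstacle.} The main obstacle is Step~1, namely the invariance of $\mu$ including the diagonal bookkeeping. When $x=y$, a copy does not change the positions, and one must track mass flowing in from both orderings. The integrability of $p_t$ follows from $k>0$ and the irreducibility of $Q$, which make $A$ positive definite in $\ell^2(\nu)$.
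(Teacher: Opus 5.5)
Your argument follows the paper's proof: Kwa\'{s}nicki's Green invariant measure $\nu_xG(x,y)/Z$, the first-death formula for $h_2$, and the identity $G\ell_t=P_tGk=p_t$. Steps~2 and~3, the reduction $F^{(2)}_{ij}=2\sum_y\mu(i,y)h_2(j,e_y)$, and the balanced-port conclusion are all correct as written.

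The one error is a sign in your Step~1 verification. Put $D_\nu=\diag(\nu)$, $W=D_\nu G$ and use $Q=\diag(k)-A$. Reversibility gives $A^{\mathsf T}D_\nu G=D_\nu AG=D_\nu$, so $Q^{\mathsf T}W=\diag(k)W-D_\nu$ and $WQ=W\diag(k)-D_\nu$. Hence
\[
 \bigl(Q^{\mathsf T}W+WQ\bigr)(x,y)=(k_x+k_y)\nu_xG(x,y)-2\nu_x\delta_{xy},
\]
which is the negative of the expression you wrote. With your sign, the motion term, the killing loss $-(k_x+k_y)\nu_xG(x,y)$ and the diagonal gain $2\nu_x\delta_{xy}$ sum to $-2(k_x+k_y)\nu_xG(x,y)+4\nu_x\delta_{xy}\neq0$. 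So your claim that ``these terms cancel'' is false as stated. With the corrected sign the three contributions do cancel and the rest of your proof goes through unchanged.

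The paper avoids this bookkeeping by grouping free motion with the killing loss into the killed drift $-A^{\mathsf T}W-WA=-2D_\nu$. The copying gain $2\nu_x(Gk)(x)=2\nu_x$ on the diagonal then offsets it exactly.
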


\begin{proof}
Let $k_- =\min_i k_i>0$. A constant killing clock of rate $k_-$ gives
$\|P_t\|_\infty\le e^{-k_-t}$. It follows that
$G=\int_0^\infty P_t\,dt$ has nonnegative entries and satisfies
$AG=GA=I$. Since $A\1=k$, we also have $Gk=\1$ and
$\ell_t=-\partial_t p_t=P_tk$.

Put $D_\nu=\diag(\nu)$. By reversibility,
$A^{\mathsf T}D_\nu=D_\nu A$, and hence $W=D_\nu G$ is symmetric.
Under the independent killed motions, the drift of the pair measure $W$
is
\begin{equation}\label{eq:green-pair-drift}
 -A^{\mathsf T}W-WA=-2D_\nu.
\end{equation}
The copying events restore mass at the diagonal pair $(x,x)$ at rate
\[
 \sum_z W(x,z)k_z+\sum_z W(z,x)k_z
 =2\nu_x(Gk)(x)=2\nu_x.
\]
This cancels \eqref{eq:green-pair-drift}. Normalizing $W$ therefore gives
the stationary probability of the ordered two-particle system:
\begin{equation}\label{eq:green-pair}
 \mu_2(x,y)=\frac{\nu_xG(x,y)}{Z}.
\end{equation}
Formula~\eqref{eq:green-pair} is the finite-state Green invariant measure
obtained by Kwa\'{s}nicki~\citep[Theorem~1.5]{Kwasnicki2024}.

Before the first death the two particles follow independent killed
motions. At that death the surviving label fixes. A label initially at
$j$, with the other label initially at $z$, consequently has fixation
probability
\begin{equation}\label{eq:two-founder-integral}
 h_2(j,e_z)=\int_0^\infty p_t(j)\ell_t(z)\,dt.
\end{equation}
By symmetry of $\mu_2$ and \eqref{eq:transport},
$F_{ij}^{(2)}=2\sum_z\mu_2(i,z)h_2(j,e_z)$; both slots contribute when
the particles start at the same site. Since $G$ commutes with $P_t$,
\[
 G\ell_t=GP_tk=P_tGk=p_t.
\]
By \eqref{eq:green-pair}, \eqref{eq:two-founder-integral} and Tonelli's
theorem,
\begin{align*}
 F_{ij}^{(2)}
 &=2\sum_z\mu_2(i,z)\int_0^\infty p_t(j)\ell_t(z)\,dt\\
 &=\frac{2\nu_i}{Z}\int_0^\infty p_t(j)(G\ell_t)(i)\,dt\\
 &=\frac{2\nu_i}{Z}\int_0^\infty p_t(i)p_t(j)\,dt.
\end{align*}
This proves \eqref{eq:green-flux}. Dividing by $\nu_i$ gives symmetry in
$i,j$, so \eqref{eq:balanced-ports} gives $\Delta_2=0$.
\end{proof}

We finish with the balance identity at constant killing, which supplies
the reference point for the perturbation in the next section.

\begin{proposition}\label{prop:neutral}
If $k_i=\kappa>0$ for all $i$, then for every $N\ge2$,
\begin{equation}\label{eq:neutral-transport}
 h_N(i,d)=\frac1N,\qquad
 \E_{\pi_N}c_i=\frac{N\nu_i}{\sum_j\nu_j},\qquad
 F_{ij}^{(N)}=\frac{\nu_i}{\sum_j\nu_j}.
\end{equation}
Every balanced port flux and every edge current is zero.
\end{proposition}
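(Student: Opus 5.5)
We will verify the three identities in \eqref{eq:neutral-transport} one at a time, in the order $h_N$, then $\pi_N$, then $F^{(N)}$, and read off the vanishing of the currents at the end.

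\emph{Fixation probabilities.} With constant killing, every slot dies at the same rate $\kappa$ regardless of position. The copying dynamics of the labels is then independent of the spatial motion: each ordered pair of slots $(p,r)$ performs a copy at rate $\kappa/(N-1)$. The labelled process is therefore exchangeable in the slots as far as the label dynamics is concerned. By Lemma~\ref{lem:fixation}, exactly one label fixes, so each of the $N$ labels fixes with probability $1/N$, whatever the initial positions. Algebraically, we check that $h\equiv1/N$ solves \eqref{eq:founder-anchor}. The motion terms of $T_N$ annihilate constants, and by \eqref{eq:founder-row-sum} the killing part gives $-\kappa+\kappa(N-1)/(N-1)=0$. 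The anchor is $1/N$. Proposition~\ref{prop:founder-unique} then gives $h_N\equiv1/N$.

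\emph{Mean occupation.} Under $\pi_N$, stationarity of the position chain (for instance through the labelled slot process) gives $\E_{\pi_N}c_i=N\nu_i/\sum_j\nu_j$. To see this, take $\varphi(c)=c_i$ and apply $L_N$ from \eqref{eq:population-generator}. The copying contributions give
\[
\sum_{j\ne i}\bigl(c_jc_i\kappa-c_ic_j\kappa\bigr)/(N-1)=0,
\]
so $L_Nc_i=\sum_j c_j q(j,i)-c_i\sum_{j}q(i,j)$. Integrating against $\pi_N$ shows that the vector $m_i=\E_{\pi_N}c_i$ satisfies $mQ=0$. Irreducibility of $Q$ and reversibility then force $m\propto\nu$. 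The normalization $|m|=N$ gives the stated formula.

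\emph{Transport and currents.} Inserting these values into \eqref{eq:transport} gives
\[
F^{(N)}_{ij}=\frac1N\,\E_{\pi_N}c_i=\frac{\nu_i}{\sum_j\nu_j}.
\]
Then $J_{ij}^{(N)}=(\nu_iq(i,j)-\nu_jq(j,i))/\sum\nu=0$ by \eqref{eq:reversibility}. For balanced ports, $aF_{\alpha\beta}-bF_{\beta\alpha}=(\nu_\alpha a-\nu_\beta b)/\sum\nu=0$ by \eqref{eq:balanced-ports}. No step is a real obstacle. The only point needing care is the cancellation of the copying terms in $L_Nc_i$, which uses that the killing rate is the same at every site.
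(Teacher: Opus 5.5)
Your proof is correct and follows essentially the same route as the paper: the pairwise cancellation of the copying terms in $L_Nc_i$ giving $mQ=0$, the identification of $m$ with $N\nu/\sum_j\nu_j$ through irreducibility and reversibility, and the direct substitution into \eqref{eq:transport} are exactly the paper's steps. The only variation is in the fixation step. The paper notes that a colour occupying $M$ slots increases and decreases at the same rate $\kappa M(N-M)/(N-1)$, so $M/N$ is harmonic. You instead use slot exchangeability of the position-independent copying, or equivalently check $T_N\1=0$ via \eqref{eq:founder-row-sum} and invoke Proposition~\ref{prop:founder-unique}; both are valid.
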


\begin{proof}
If a colour occupies $M$ slots, both its increase rate and its decrease
rate are $\kappa M(N-M)/(N-1)$. Thus $M/N$ is harmonic and has the
boundary values of the fixation probability. By
Lemma~\ref{lem:fixation}, that probability is $M/N$. Taking $M=1$ gives
$h_N=1/N$.

The copying drift of each population count cancels in pairs, so the
stationary mean vector $\rho_i=\E_{\pi_N}c_i$ satisfies $\rho Q=0$ and
$\sum_i\rho_i=N$. Irreducibility and reversibility identify
$\rho_i=N\nu_i/\sum_j\nu_j$. Substituting this and $h_N=1/N$ into
\eqref{eq:transport} gives the last identity in
\eqref{eq:neutral-transport}. The port cancellation follows from
\eqref{eq:balanced-ports}, and the edge cancellation from reversibility.
\end{proof}

At constant killing, ancestral balance also holds when the population
count chain is nonreversible, as occurs on symmetric cycle graphs with
at least four sites \citep{Corujo2021}.

\section{Circulation on the triangle}\label{sec:cubic}

We now prove the second part of Theorem~\ref{thm:main}. We keep the
motion on the triangle fixed as in \eqref{eq:triangle}. By rationality,
a nonzero current at one killing vector will suffice for each population
size.

At $N=3$ and $k=(1,2,4)$, the ten stationary count probabilities and
the eighteen founder probabilities are rational. They are listed in
Appendix~\ref{app:three}, together with the stationary equations and
the founder equation \eqref{eq:founder-operator}. Substitution
of those values in \eqref{eq:transport} gives
\begin{equation}\label{eq:three-witness}
 \Delta_3=\frac{421413160}{139117744582101}>0.
\end{equation}
The normalization and Proposition~\ref{prop:founder-unique} identify
these values as the fixation probabilities. This proves
\eqref{eq:three-witness} for the particle system.

For the rest of the calculation let $N\ge4$, put $s_0=-1,s_1=0,s_2=1$,
and set
\begin{equation}\label{eq:killing-perturbation}
 k_i(\eps)=1+\eps s_i,\qquad |\eps|<1.
\end{equation}
Write $\pi_\eps$ and $h_\eps$ for the stationary law and founder
probabilities. We first record the neutral moments and a polynomial
Poisson equation that will be used in the calculation.

Write $x=c_0$, $y=c_2$ and $c_1=N-x-y$. At $\eps=0$ the count chain has
the Dirichlet--multinomial law
\begin{equation}\label{eq:neutral-law}
 \pi_0(c)=\frac{N!}{c_0!c_1!c_2!}
 \frac{\prod_{i=0}^2(N-1)^{\overline{c_i}}}
 {[3(N-1)]^{\overline N}},
\end{equation}
where $z^{\overline a}=z(z+1)\cdots(z+a-1)$ and the empty product is one.
This is the complete-graph neutral invariant law; see also
Cloez and Thai~\citep[Theorem~2.4]{CloezThai2016Examples}. Detailed balance and normalization are established
in Appendix~\ref{app:perturb}. For falling factorials
$z_{\underline a}=z(z-1)\cdots(z-a+1)$, its moments are
\begin{equation}\label{eq:neutral-factorial-moments}
 \pi_0(x_{\underline a}y_{\underline b})
 =N_{\underline{a+b}}
 \frac{(N-1)^{\overline a}(N-1)^{\overline b}}
 {[3(N-1)]^{\overline{a+b}}},\qquad a+b\le N.
\end{equation}

Let $\mathcal P_d$ denote the polynomials in $x,y$ of total degree at
most $d$. Write $L_\eps=L_0+\eps L_1$. The neutral motion and copying
rates give
\begin{equation}\label{eq:polynomial-degrees}
 L_0\mathcal P_d\subseteq\mathcal P_d,\qquad
 L_1\mathcal P_d\subseteq\mathcal P_{d+1}.
\end{equation}
On the highest homogeneous part of degree $d\ge1$, $L_0$ acts by the
scalar
\begin{equation}\label{eq:homogeneous-eigenvalue}
 \lambda_d=-3d-\frac{d(d-1)}{N-1}.
\end{equation}
Indeed, the highest-degree motion term is
$-3(x\partial_x+y\partial_y)$. The copying terms cancel their first
derivatives in opposite pairs; their second derivatives contribute
$-(x^2\partial_{xx}+2xy\partial_{xy}+y^2\partial_{yy})/(N-1)$.
Higher finite differences have lower degree. Euler's identities give
\eqref{eq:homogeneous-eigenvalue}. For the spectrum of this neutral Moran
generator, see also Corujo~\citep{Corujo2023}.

\begin{lemma}\label{lem:poisson-descent}
For $f\in\mathcal P_d$, $d\le4$, there is a polynomial $\Psi_f$ of
degree at most $d$ satisfying
\begin{equation}\label{eq:polynomial-poisson}
 L_0\Psi_f=f-\pi_0f.
\end{equation}
It can be constructed by removing the homogeneous terms successively
from degree $d$ to degree one.
\end{lemma}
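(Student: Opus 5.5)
We construct $\Psi_f$ by descending induction on the degree. The two inputs are \eqref{eq:polynomial-degrees} and \eqref{eq:homogeneous-eigenvalue}. Together they say that, modulo $\mathcal P_{m-1}$, $L_0$ acts on a homogeneous polynomial $g$ of degree $m\ge1$ as multiplication by $\lambda_m$.

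First we record that $\lambda_m\ne0$ for $m\ge1$. Indeed $\lambda_m=-3m-m(m-1)/(N-1)<0$, so $\lambda_m^{-1}$ exists.

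The descent runs as follows. Set $r_d=f$. Suppose $r_m\in\mathcal P_m$ has been defined for some $m\ge1$, and let $g_m$ be its homogeneous part of degree $m$. Put $\psi_m=\lambda_m^{-1}g_m$. By \eqref{eq:homogeneous-eigenvalue} and the invariance in \eqref{eq:polynomial-degrees}, $L_0\psi_m-g_m\in\mathcal P_{m-1}$. Hence $r_{m-1}=r_m-L_0\psi_m$ lies in $\mathcal P_{m-1}$. After the step $m=1$ we obtain a constant $r_0$. Taking $\Psi_f=\psi_d+\psi_{d-1}+\cdots+\psi_1$, we have $\deg\Psi_f\le d$ and
\[
 f-L_0\Psi_f=r_0 .
\]

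It remains to identify the constant as $r_0=\pi_0f$. Since $\pi_0$ is stationary for $L_0$ and the state space $\Omega_N$ is finite, $\pi_0(L_0\Psi)=0$ for every function $\Psi$ on $\Omega_N$. Integrating the last display against $\pi_0$ gives $\pi_0f=r_0$. This is exactly \eqref{eq:polynomial-poisson}.

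The one subtle step is the meaning of \eqref{eq:homogeneous-eigenvalue} as a statement about polynomials. The polynomials in $x,y$ are evaluated as functions on the finite set of counts, and the claim is that for homogeneous $g$ of degree $m$, the formal polynomial $L_0g$ equals $\lambda_mg$ plus a polynomial of lower degree. This needs to hold as polynomials, not merely as functions on $\Omega_N$. I would verify it by writing $L_0$ as a finite-difference operator with polynomial coefficients. From \eqref{eq:population-generator} with $k\equiv1$, the coefficients for the moves $c\mapsto c-e_i+e_j$ are $c_i+c_ic_j/(N-1)$, with $c_1=N-x-y$. Each move shifts $(x,y)$ by a vector in $\{\pm e_x,\pm e_y,\pm(e_x-e_y)\}$.

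Expanding each difference by Taylor's formula, the first-order terms carry coefficients of degree $\le2$. The degree-two parts of these coefficients cancel between opposite moves, leaving the linear drift $-3(x\partial_x+y\partial_y)$ plus lower-order terms. The second-order terms with degree-two coefficients give the stated second-derivative operator. Every remaining term lowers the degree. Euler's identity then yields $\lambda_m$.

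The restriction $d\le4$ is not needed by this argument. The construction works for every degree, and the bound $d\le4$ merely matches what the cubic calculation requires.
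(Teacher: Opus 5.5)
Your proof is correct and is the same descent the paper uses: the paper starts from $f-\pi_0f$ and notes that each subtraction of $L_0([r]_j/\lambda_j)$ preserves zero $\pi_0$-mean, so the final constant is zero. You start from $f$ and identify the final constant as $\pi_0f$ by stationarity, which is the same argument, and your side remarks are also sound: the rates vanish on the boundary so the formal operator matches the generator on $\Omega_N$, and the bound $d\le4$ is not needed for existence.
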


\begin{proof}
For a polynomial $r$ of degree $j\ge1$, let $[r]_j$ denote its
highest homogeneous part. Equations~\eqref{eq:polynomial-degrees} and
\eqref{eq:homogeneous-eigenvalue} show that
$L_0([r]_j/\lambda_j)$ has the same highest part. Subtracting it from
$r$ reduces the degree. Starting with $r=f-\pi_0f$ and descending from
degree $d$ to degree one constructs $\Psi_f$ of degree at most $d$.
Each subtraction preserves zero $\pi_0$-mean, so the final constant
remainder is zero.
\end{proof}

\begin{proposition}\label{prop:cubic}
For every $N\ge4$, the current for \eqref{eq:killing-perturbation}
satisfies \eqref{eq:cubic-intro}.
\end{proposition}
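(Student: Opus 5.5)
We will expand both factors in \eqref{eq:transport} in powers of $\eps$ and compute $\Delta_N=F^{(N)}_{02}-F^{(N)}_{20}$ through order $\eps^3$. By Proposition~\ref{prop:rationality} both $\pi_\eps$ and $h_\eps$ are real analytic in $\eps$, so Taylor expansion is justified.

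Oddness comes from the reflection $0\leftrightarrow2$. It maps $k(\eps)$ to $k(-\eps)$, preserves the triangle motion and exchanges $F_{02}$ with $F_{20}$. Hence $\Delta_N(-\eps)=-\Delta_N(\eps)$, and it suffices to compute the coefficients of $\eps$ and $\eps^3$.

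\emph{Founder expansion.} Write $h_\eps=\frac1N+\eps h^{(1)}+\eps^2h^{(2)}+\eps^3h^{(3)}+O(\eps^4)$. Writing $T_N=T^{(0)}+\eps T^{(1)}$, where $T^{(1)}$ is the killing part weighted by $s$, we solve
\[
T^{(0)}h^{(m)}=-T^{(1)}h^{(m-1)},
\]
with the anchor normalization of Proposition~\ref{prop:founder-unique}. By \eqref{eq:founder-row-sum}, the first source is $-\frac1N\bigl(-s_i+\frac1{N-1}\sum_a d_as_a\bigr)$. This is affine in the counts $d$ and in the indicator of the distinguished site $i$.

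The key point is that $T^{(0)}$ is the neutral generator of (distinguished particle, remaining counts). It preserves polynomial degree in $(d_0,d_2)$, with functions of $i$ as coefficients, and $T^{(1)}$ raises the degree by at most one. The same descent as in Lemma~\ref{lem:poisson-descent} therefore yields $h^{(m)}$ of degree at most $m\le3$. We would make an ansatz $h^{(m)}(i,d)=\sum_{i'}\1_{\{i=i'\}}p^{(m)}_{i'}(d_0,d_2)$ and match coefficients. The constants are fixed by the normalization \eqref{eq:founder-normalization} at each order, which is equivalent to the anchor condition.

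\emph{Stationary expansion.} For $\pi_\eps f$ with $f$ a polynomial, differentiate $\pi_\eps L_\eps=0$. This gives
\[
\pi^{(1)}f=\pi_0\bigl(L_1\Psi_f\bigr),
\]
where $\Psi_f$ comes from Lemma~\ref{lem:poisson-descent}; the sign follows from $\pi^{(1)}L_0=-\pi_0L_1$. Iterating, $\pi^{(2)}f$ and $\pi^{(3)}f$ are expressed as $\pi_0$ of $L_1\Psi_{L_1\Psi_{\cdots}}$ compositions.

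Now collect terms. $F_{02}-F_{20}$ is $\sum_{a+b=3}\pi^{(a)}\bigl[c_0h^{(b)}(2,c-e_0)-c_2h^{(b)}(0,c-e_2)\bigr]$, and each bracket is a polynomial of degree at most $b+1\le4$. Applying $L_1$ at most $a$ times and descending keeps every Poisson problem at degree at most $4$. Everything therefore reduces to the neutral factorial moments \eqref{eq:neutral-factorial-moments} of degree at most $4$, which are explicit rational functions of $N$.

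At order $\eps$ we would check that the two linear contributions cancel. This must happen, since otherwise a linear current would persist; it also serves as a consistency check. Collecting the order-$\eps^3$ terms and simplifying the rational function of $N$ should give $-C_N$. We expect the factors $(3N-2),(3N-1),(3N+1)$ to come from the denominators $\lambda_d$ and from $[3(N-1)]^{\overline{a}}$.

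The main obstacle is bookkeeping in the third-order terms, in two respects. First, the cross terms $\pi^{(2)}h^{(1)}$ and $\pi^{(1)}h^{(2)}$ require nested Poisson solves. Second, lower-order finite-difference corrections must be tracked, because the boundary conventions at $c_i=0$ make polynomial identities valid only where the counts are admissible; the factors $c_i$ handle this. We would first verify the whole scheme symbolically at $N=4,5$ against a direct solve, and then carry out the general-$N$ simplification. The factor $(N-3)$ in $C_N$ is consistent with the restriction $N\ge4$, since at $N=3$ the cubic coefficient vanishes and the direct witness \eqref{eq:three-witness} is used instead.
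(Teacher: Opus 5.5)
Your outline follows the paper's proof step for step. Both use analyticity from Proposition~\ref{prop:rationality}, the founder recursion $T_0h^{[r]}=-T_1h^{[r-1]}$ with the anchor, and stationary responses $\Phi_r(f)=\Phi_0(\mathcal R^rf)$ obtained from polynomial Poisson problems. Both then use the four-term decomposition \eqref{eq:cubic-four-terms} with all moments of degree at most four, an explicit first-order cancellation, and oddness from the $0\leftrightarrow2$ reflection. What the paper adds is the calculation you defer: the explicit $h^{[2]},h^{[3]}$, the four values \eqref{eq:app-four-contributions}, and the identity \eqref{eq:app-cubic-factorization}. That calculation is the substance of the statement.

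Three points in your sketch need correcting.

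First, the sign. With $L_0\Psi_f=f-\pi_0f$, the relation $\pi^{(1)}L_0=-\pi_0L_1$ gives $\pi^{(1)}f=-\pi_0(L_1\Psi_f)$, not $+$. With the sign as you display it, the odd-order stationary terms flip. Your first-order check would then give $\Phi_0G_1+\Phi_1G_0=-\tfrac{4(N-1)}{3(3N-2)}$ instead of zero.

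Second, degree preservation alone does not make the descent work for $T_0$; its top-degree action must also be invertible. On triples of homogeneous degree-$m$ polynomials indexed by $i$, $T_0$ acts at top degree as $\mu_m+Q$ in the $i$ variable, where $\mu_m=-3m-m(m+1)/(N-1)$. The extra $-2m/(N-1)$ relative to $\lambda_m$ comes from copies of the founder. The eigenvalues $\mu_m$ and $\mu_m-3$ are nonzero for $m\ge1$. The final $d$-independent remainder is solvable because $T_0$ acts on functions of $i$ alone as $Q$, so the $i$-marginal of its stationary law is uniform. The paper sidesteps this by stating the polynomials and checking them against uniqueness. Incidentally, $\mu_3=-3(3N+1)/(N-1)$ is the source of the factor $3N+1$ in $C_N$; it does not come from the $\lambda_d$ or from the rising factorials.

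Third, ``otherwise a linear current would persist'' is not a reason for the linear cancellation. Relabelling sites shows that $\Delta_N$ is an alternating function of $k$. Alternating polynomials are divisible by the Vandermonde $(s_0-s_1)(s_0-s_2)(s_1-s_2)$, so the Taylor terms at $k=\1$ vanish below degree three. Alternatively, compute the linear term explicitly as the paper does.
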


\begin{proof}
\emph{Step 1: the fixation probabilities.}
Proposition~\ref{prop:rationality} gives convergent expansions around zero,
\begin{equation}\label{eq:taylor-expansions}
 \pi_\eps=\sum_{r\ge0}\eps^r\pi_r,
 \qquad h_\eps=\sum_{r\ge0}\eps^r h^{[r]}.
\end{equation}
These are Taylor coefficients without factorials. The population size
$N$ stays fixed throughout the expansion. In particular, $h^{[0]}=1/N$.

Write $T_\eps=T_0+\eps T_1$ for the founder operator. Constant killing
makes $T_0$ an irreducible Markov generator on $\mathcal S_N$. Comparing
coefficients in $T_\eps h_\eps=0$ gives
\begin{equation}\label{eq:founder-recursion}
 T_0h^{[r]}=-T_1h^{[r-1]},\qquad
 h^{[r]}(0,(N-1)e_0)=0\quad(r\ge1).
\end{equation}
Its solution is unique because $\ker T_0$ consists of constants.

Put $S=d_2-d_0$ and $R=d_0+d_2$. The first coefficient is
\begin{equation}\label{eq:first-founder}
 h^{[1]}(i,d)=\frac{S-(N-1)s_i}{N(3N-1)}.
\end{equation}
The next two coefficients have the polynomial forms
\begin{equation}\label{eq:founder-polynomial-forms}
\begin{aligned}
 h^{[2]}(i,d)&=a_2S^2+b_2R+c_2s_iS+\delta_2s_i^2,\\
 h^{[3]}(i,d)&=\beta_0S^3+\beta_1SR+\beta_2S+\beta_3s_iS^2
 +\beta_4s_iR+\beta_5s_i+\beta_6s_i^2S.
\end{aligned}
\end{equation}
All coefficient values, the shift formula for $T_0,T_1$, and the
identities establishing \eqref{eq:founder-recursion} are given in
Appendix~\ref{app:perturb}. The coefficients are rational functions of
$N$. Substitution verifies the recursion and its normalization;
uniqueness then identifies these polynomials as the Taylor coefficients.

\emph{Step 2: the stationary law.}
The derivatives of $\pi_\eps$ can be computed from polynomial Poisson
equations under the neutral law.
Define $\Phi_r(f)=\pi_rf$ and $\mathcal R f=-L_1\Psi_f$. Normalization
gives $\Phi_r(1)=0$ for $r\ge1$. The stationary equation yields
$\pi_rL_0=-\pi_{r-1}L_1$. For $r\ge1$, we obtain
\begin{equation}\label{eq:stationary-recursion}
\begin{aligned}
 \Phi_r(f)&=\pi_r(f-\pi_0f)=\pi_rL_0\Psi_f\\
 &=-\pi_{r-1}L_1\Psi_f=\Phi_{r-1}(\mathcal Rf),\\
 \Phi_r(f)&=\Phi_0(\mathcal R^rf)
\end{aligned}
\end{equation}
whenever the indicated polynomial Poisson problems have degree at most
four. Adding a constant to $\Psi_f$ has no effect because $L_1$ kills
constants. Each application of $\mathcal R$ raises the degree by at most
one. Thus a third-order stationary response of a linear observable uses
only fourth moments.

\emph{Step 3: the current.}
Define
\begin{equation}\label{eq:flux-polynomials}
 G_r(x,y)=x h^{[r]}(2,c-e_0)-y h^{[r]}(0,c-e_2),
 \qquad c=(x,N-x-y,y),\quad r=0,1,2,3.
\end{equation}
Terms with a zero leading count are evaluated as zero. Polynomial
extension gives the same value at those boundary states. We have
$\deg G_r\le r+1$, and expansion of the current gives
\begin{equation}\label{eq:cubic-four-terms}
 [\eps^3]\Delta_N
 =\Phi_0G_3+\Phi_1G_2+\Phi_2G_1+\Phi_3G_0.
\end{equation}
Since $\deg G_r\le r+1$ and $\mathcal R$ raises the degree by at most
one, each term in \eqref{eq:cubic-four-terms} is determined by neutral
moments of degree at most four. Substitution gives the four values in
\eqref{eq:app-four-contributions}. The Python script in
Supplementary materials reproduces these values by exact rational
arithmetic. Their sum factors as
\begin{equation}\label{eq:cubic-coefficient}
 [\eps^3]\Delta_N
 =-\frac{2(N-3)(N-2)(N-1)^2}
 {9N(3N-2)^2(3N-1)^2(3N+1)}.
\end{equation}

The cancellation of the first-order response can be seen directly. From
\eqref{eq:first-founder},
\begin{equation}\label{eq:first-flux-polynomials}
 G_0=\frac{x-y}{N},\qquad
 G_1=-\frac{(x-y)^2+(N-2)(x+y)}{N(3N-1)}.
\end{equation}
The neutral moments give
\begin{equation}\label{eq:first-order-cancellation}
 \Phi_0G_1=-\frac{2(N-1)}{3(3N-2)},\qquad
 \Phi_1G_0=\frac{2(N-1)}{3(3N-2)}.
\end{equation}
For the second identity, use
$\Psi_{G_0}=-(x-y)/(3N)$ and
\[
 \mathcal RG_0=\frac{N(x+y)-(x-y)^2}{3N(N-1)}.
\]
Thus the first change in the founder weights is exactly offset by the
first change in the stationary law.

The reflection $0\leftrightarrow2$ changes $k(\eps)$ into $k(-\eps)$
and reverses the port pair. Uniqueness of the stationary law and of the
fixation probabilities gives $\Delta_N(-\eps)=-\Delta_N(\eps)$.
The current is analytic, so its even coefficients vanish. Together with
\eqref{eq:cubic-coefficient} and \eqref{eq:first-order-cancellation},
this proves \eqref{eq:cubic-intro}. For each fixed $N\ge4$, sufficiently
small positive $\eps$ gives strictly negative current.
\end{proof}

The factor $N-3$ in \eqref{eq:cubic-coefficient} makes the cubic term
vanish at $N=3$ for this perturbation. The killing vector $(1,2,4)$
provides the nonzero three-particle value in \eqref{eq:three-witness}.
We finish by passing from these values to generic circulation.

\begin{lemma}\label{lem:algebraic}
On a positive parameter domain as in \eqref{eq:parameter-domain}, a
balanced port flux is either identically zero or has a zero set equal
to the intersection of the domain with a proper algebraic set. In the
second case its zero set has Lebesgue measure zero and empty interior,
and its nonzero set is open and dense.
\end{lemma}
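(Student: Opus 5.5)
By Proposition~\ref{prop:rationality}, every coordinate of $\pi_N$, $h_N$ and $F_{ij}^{(N)}$ is a rational function on $\mathcal P$ whose polynomial denominator has no zero in $\mathcal P$. Since the port rates $a,b$ in \eqref{eq:port-flux} are fixed positive constants, the balanced port flux $\Delta_N$ is also rational on $\mathcal P$. We write $\Delta_N=P/D$ with polynomials $P,D$ and $D\ne0$ on $\mathcal P$. A common denominator can be taken to be a product of the determinants used in the proof of Proposition~\ref{prop:rationality}. Then the zero set of $\Delta_N$ in $\mathcal P$ is exactly $\mathcal P\cap\{P=0\}$, and it remains to show that $\{P=0\}$ is proper unless $\Delta_N\equiv0$.

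We split into two cases. If $P$ is the zero polynomial, then $\Delta_N$ vanishes identically, which is the first alternative. Otherwise $\{P=0\}$ is a proper algebraic subset of the ambient Euclidean space $\R^{E}\times\R^{I}$, or $\R^I$ when the motion is fixed. There is one subtlety: $P$ could be a nonzero polynomial while $\Delta_N$ still vanishes identically on the open set $\mathcal P$. This cannot happen, because a polynomial that vanishes on a nonempty open set is the zero polynomial. This follows by induction on the number of variables, or from analyticity, since all derivatives at an interior point vanish. So the two alternatives are exhaustive and exclusive.

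In the second case, the zero set of a nonzero polynomial has Lebesgue measure zero. We would prove this by the standard induction on dimension with Fubini's theorem. Write $P$ as a polynomial in the last variable with coefficients polynomial in the others. Off the measure-zero set where all these coefficients vanish, each fibre contains only finitely many roots. Measure zero then implies empty interior. The zero set is relatively closed in $\mathcal P$ by continuity of $\Delta_N$, so its complement is open. That complement is dense in $\mathcal P$ because its complement has empty interior.

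I expect no real obstacle; the only point needing care is the measure-zero statement for algebraic sets. The alternative is to note that $\Delta_N$ is real analytic on the connected open set $\mathcal P$ and cite that the zero set of a nontrivial real-analytic function on a connected domain is null. This route gives the same conclusion without clearing denominators.
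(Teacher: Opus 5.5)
Your proposal is correct and follows essentially the same route as the paper: write the flux as $P/D$ via Proposition~\ref{prop:rationality}, observe that a nonzero value forces $P\ne0$, prove that $\{P=0\}$ is null by induction on the number of variables together with Fubini's theorem, and deduce empty interior and an open dense nonzero set. Your extra check that the two alternatives are mutually exclusive (a nonzero polynomial cannot vanish on the open set $\mathcal P$), and your real-analytic alternative, are valid additions but are not needed for the argument.
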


\begin{proof}
By Proposition~\ref{prop:rationality}, write the flux as $p/d$, where
$p,d$ are polynomials and $d$ has no zero in the domain. A nonzero flux
at one point makes $p$ a nonzero polynomial.

For completeness, a nonzero polynomial has a null zero set. This is
immediate in one variable. In $r$ variables write
$p(x',t)=\sum_j p_j(x')t^j$ and choose a nonzero coefficient $p_j$.
Induction makes its zero set in $r-1$ variables null. Outside that set,
the polynomial in $t$ has finitely many roots. Fubini's theorem on
bounded boxes, followed by a countable union of boxes, proves the claim.
No nonempty open ball is null, so the zero set has empty interior.
Continuity gives openness of the nonzero set.
\end{proof}

\begin{proof}[Proof of Theorem~\ref{thm:main}]
Conservation is Proposition~\ref{prop:conservation}, and two-particle
balance is Theorem~\ref{thm:green-balance}. On the fixed triangle,
\eqref{eq:three-witness} gives a nonzero killing vector for $N=3$.
For each $N\ge4$, \eqref{eq:cubic-intro} gives one with positive
coordinates. Apply Lemma~\ref{lem:algebraic} on the killing domain for
each fixed $N$. This proves all assertions about $\mathcal U_N$.

The complement of $\mathcal U_*$ is the countable union of the null
complements of $\mathcal U_N$, so it is null. The set $\mathcal U_*$ is
a countable intersection of open sets, hence a $G_\delta$.
Every open ball in $(0,\infty)^3$ has positive measure and intersects
$\mathcal U_*$. This proves density and supplies a single killing
vector valid for every $N\ge3$.
\end{proof}

\begin{corollary}\label{cor:persistence}
Fix a finite set $\mathcal A\subset\{3,4,\ldots\}$ and a killing vector
$k\in\bigcap_{N\in\mathcal A}\mathcal U_N$. There is a neighbourhood of
the corresponding conductances and killing rates on which every
$\Delta_N$, $N\in\mathcal A$, has the same sign as at that vector and
\begin{equation}\label{eq:half-margin}
 |\Delta_N(\vartheta)|\ge\tfrac12|\Delta_N(\vartheta_*)|.
\end{equation}
Here $\vartheta=(w,k)$ and $\vartheta_*$ is the initial parameter point.
\end{corollary}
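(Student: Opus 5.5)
The corollary is a continuity statement, and the plan is to deduce it directly from Proposition~\ref{prop:rationality}. The triangle motion \eqref{eq:triangle} corresponds to the conductances $w_{ij}=1$ on the complete edge set $E$ of $\{0,1,2\}$, with $\nu_i=1$. So the initial point $\vartheta_*=(w_*,k)$ lies in the interior of the parameter domain $\mathcal P=(0,\infty)^E\times(0,\infty)^I$ of \eqref{eq:parameter-domain}. For each fixed $N$, Proposition~\ref{prop:rationality} shows that $F_{02}^{(N)}$ and $F_{20}^{(N)}$ are rational functions on $\mathcal P$ whose denominators have no zeros there. Hence they are continuous on $\mathcal P$.

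The first point to settle is which quantity $\Delta_N(\vartheta)$ denotes once the conductances vary. I would take the balanced port flux \eqref{eq:port-flux} with $\alpha=0$, $\beta=2$ and $a=b=1$; this is balanced because $\nu_0=\nu_2=1$. It gives $\Delta_N(\vartheta)=F_{02}^{(N)}(\vartheta)-F_{20}^{(N)}(\vartheta)$, which extends the definition preceding Theorem~\ref{thm:main}. An alternative is the edge current $J_{02}^{(N)}=w_{02}(F_{02}^{(N)}-F_{20}^{(N)})$. This is also continuous, because the factor $w_{02}>0$ is continuous. The argument below works for either choice, so the only real task is to fix the convention.

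With $\Delta_N$ fixed, the proof proceeds one $N$ at a time. For $N\in\mathcal A$ the hypothesis $k\in\mathcal U_N$ gives $\Delta_N(\vartheta_*)\ne0$. By continuity there is an open neighbourhood $V_N\subset\mathcal P$ of $\vartheta_*$ on which
\[
 |\Delta_N(\vartheta)-\Delta_N(\vartheta_*)|<\tfrac12|\Delta_N(\vartheta_*)|.
\]
This single inequality yields both conclusions. It gives \eqref{eq:half-margin} by the reverse triangle inequality. It also gives the sign claim: $\Delta_N(\vartheta)$ lies in the open interval centred at $\Delta_N(\vartheta_*)$ with radius $\tfrac12|\Delta_N(\vartheta_*)|$, and this interval does not contain $0$.

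Finally, set $V=\bigcap_{N\in\mathcal A}V_N$. This is an open neighbourhood of $\vartheta_*$ because $\mathcal A$ is finite, and on it every $\Delta_N$ with $N\in\mathcal A$ satisfies both conclusions. The finiteness of $\mathcal A$ is used only here, since an infinite intersection of neighbourhoods could shrink to the point $\vartheta_*$. There is no substantive obstacle in this argument; the only delicate point is the notational choice of $\Delta_N$ off the fixed triangle, settled above.
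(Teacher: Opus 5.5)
Your proposal is correct and follows the paper's own proof, which simply invokes the continuity from Proposition~\ref{prop:rationality} for each fixed $N$ and intersects the finitely many resulting neighbourhoods. Your explicit choice of the neighbourhood via $|\Delta_N(\vartheta)-\Delta_N(\vartheta_*)|<\tfrac12|\Delta_N(\vartheta_*)|$, and your reading of $\Delta_N$ off the fixed triangle as the balanced port flux with $a=b=1$ (legitimate because $\nu$ stays fixed while the conductances vary), only spell out details the paper leaves implicit.
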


\begin{proof}
Use continuity in Proposition~\ref{prop:rationality} and intersect the
finitely many resulting neighbourhoods.
\end{proof}

\section{Rare successful transplants}\label{sec:transplants}

We give a probabilistic interpretation of the transport coefficient.
Run the uncoloured system until a rare proposal moves a particle between
two specified sites. Then follow the moved particle's label until
fixation. The probability that the proposal occurs in one direction and
its label fixes has a simple limit as proposals become rare.

Fix the cell, $N$, and balanced ports $\alpha,\beta,a,b$ as in
\eqref{eq:balanced-ports}. Let
\begin{equation}\label{eq:proposal-fraction}
 \theta=\frac{a}{a+b},\qquad
 v_\theta(c)=\theta c_\alpha+(1-\theta)c_\beta.
\end{equation}
During the uncoloured evolution, each particle at $\alpha$ proposes a
move to $\beta$ at rate $\lambda\theta$, and each particle at $\beta$
proposes a move to $\alpha$ at rate $\lambda(1-\theta)$.
At the first proposal, move that particle, assign it mutant colour and
assign all other particles resident colour. Switch off proposals and run
the coloured system until fixation. Write $S_\lambda^+(c)$ and
$S_\lambda^-(c)$ for the probabilities of mutant fixation following a
proposal in the respective directions, including the probability of
that proposal direction, from initial counts $c$.

\begin{proposition}\label{prop:rare-success}
As $\lambda\downarrow0$,
\begin{equation}\label{eq:rare-success}
 S_\lambda^+(c)-S_\lambda^-(c)
 =\frac{\Delta_N}
 {a\E_{\pi_N}c_\alpha+b\E_{\pi_N}c_\beta}+O(\lambda),
\end{equation}
uniformly in $c\in\Omega_N$. A nonzero balanced flux therefore
gives a strict directional bias for sufficiently rare proposals.
\end{proposition}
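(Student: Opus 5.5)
We prove \eqref{eq:rare-success} by decomposing at the first proposal time $\tau$. We use the strong Markov property at $\tau$ and then let the counts mix before $\tau$.

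\emph{Exact formula at the proposal.} Before $\tau$ the counts evolve by $L_N$. Independently, proposals occur at total rate $\lambda v_\theta(c_t)$ when the counts are $c_t$. Given $c_\tau=c'$, the proposal is $\alpha\to\beta$ with probability $\theta c'_\alpha/v_\theta(c')$. The moved particle carries a fresh label and sits at $\beta$ with the others at $c'-e_\alpha$. Its fixation probability is therefore $h_N(\beta,c'-e_\alpha)$, and it does not depend on $\lambda$, since proposals are switched off afterwards. Consequently
\[
 S_\lambda^+(c)=\E_c\int_0^\infty \lambda\theta\, c_t{}_\alpha\, h_N(\beta,c_t-e_\alpha)\,
 e^{-\lambda\int_0^t v_\theta(c_s)\,ds}\,dt ,
\]
and $S_\lambda^-$ has the symmetric form. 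Define
\[
 f(c)=\theta c_\alpha h_N(\beta,c-e_\alpha)-(1-\theta)c_\beta h_N(\alpha,c-e_\beta).
\]
Then $S_\lambda^+-S_\lambda^-=\lambda\,u_\lambda(c)$, where $u_\lambda=(\lambda V-L_N)^{-1}f$ with $V=\diag(v_\theta)$. This is a Feynman--Kac resolvent on the finite space $\Omega_N$, and $v_\theta>0$ everywhere because $|c|=N$ forces $c_\alpha+c_\beta>0$ or allows a positive mean. If $v_\theta$ vanishes at some states, the operator is still invertible: irreducibility and $v_\theta\not\equiv0$ suffice.

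\emph{Expansion as $\lambda\downarrow0$.} We expand $\lambda(\lambda V-L_N)^{-1}$ by a standard singular perturbation on the finite state space. Write $\Pi$ for the projection $g\mapsto(\pi_Ng)\1$. The Poisson equation $-L_N\psi=g-\pi_Ng$ is solvable, and its solution is bounded by a constant depending on $N$ and the rates.

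Try the ansatz $u_\lambda=\kappa/\lambda+w+\lambda r_\lambda$ with $\kappa$ constant. Matching orders:
\begin{itemize}
\item Order $\lambda^{-1}$ gives $L_N\kappa=0$, consistent with $\kappa$ constant.
\item Order $\lambda^0$ gives $\kappa V\1-L_Nw=f$. Solvability requires $\kappa\,\pi_N(v_\theta)=\pi_N f$.
\end{itemize}
Hence $\lambda u_\lambda=\pi_Nf/\pi_Nv_\theta+O(\lambda)$, uniformly in $c$.

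To make the remainder bound rigorous, we will use the identity
\[
 \lambda(\lambda V-L_N)^{-1}f-\kappa\1=(\lambda V-L_N)^{-1}\lambda(f-\kappa v_\theta).
\]
Here $f-\kappa v_\theta$ has $\pi_N$-mean zero, so it equals $-L_N w_0$ for a bounded Poisson solution $w_0$. This gives
\[
 \lambda(\lambda V-L_N)^{-1}(-L_Nw_0)=\lambda w_0-\lambda^2(\lambda V-L_N)^{-1}Vw_0 .
\]
We then need to show that $\lambda(\lambda V-L_N)^{-1}$ is bounded in sup norm by a constant. This holds because $\lambda(\lambda V-L_N)^{-1}V\1=\1$ and the operator is positive, which bounds its action on any $g$ with $|g|\le Cv_\theta$. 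If some states have $v_\theta=0$, we will instead bound by $\max|g|/\min_{\pi}$-type constants via the resolvent of hitting $\{v_\theta>0\}$, using finiteness and irreducibility. The remainder is then $O(\lambda)$ uniformly. We expect this uniformity step to be the main technical point, and it is elementary.

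\emph{Identification.} Note that $\theta=a/(a+b)$ and $1-\theta=b/(a+b)$. By \eqref{eq:transport},
\[
 \pi_Nf=\frac{aF^{(N)}_{\alpha\beta}-bF^{(N)}_{\beta\alpha}}{a+b}=\frac{\Delta_N}{a+b},
 \qquad
 \pi_Nv_\theta=\frac{a\E_{\pi_N}c_\alpha+b\E_{\pi_N}c_\beta}{a+b}.
\]
The factors $a+b$ cancel, giving \eqref{eq:rare-success}. When $\Delta_N\ne0$, the leading term dominates the $O(\lambda)$ error for small $\lambda$, which yields the directional bias.
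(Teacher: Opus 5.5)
Your proof is correct and follows the paper's route. You use the same Feynman--Kac resolvent representation $S_\lambda^\pm=\lambda(\lambda M_{v_\theta}-L_N)^{-1}g_\pm$, the same limit $\pi_Ng/\pi_Nv_\theta$ from the Poisson equation of the finite chain (the paper packages this as Lemma~\ref{lem:sparse-resolvent}, via the centred inverse $B_0$ and an a priori estimate for $\lambda\le\lambda_0$), and the same identification with $\Delta_N$. Your exact-identity remainder bound is a tidy variant. Since $|Vw_0|\le\|w_0\|v_\theta$ pointwise and $\lambda(\lambda V-L_N)^{-1}v_\theta=\1$, positivity gives $\|\lambda u_\lambda-\kappa\1\|\le2\lambda\|w_0\|$ for every $\lambda>0$, so your fallback for zeros of $v_\theta$ is unnecessary. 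That is just as well, because your claim that $v_\theta>0$ everywhere is false whenever some site $\gamma\ne\alpha,\beta$ exists (take $c=Ne_\gamma$).
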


\begin{proof}
Prior to the proposal the count chain is killed at rate
$\lambda v_\theta$. Its occupation resolvent is
\[
 R_{\lambda,\theta}=(\lambda M_{v_\theta}-L_N)^{-1},
\]
where $M_v$ denotes multiplication by $v$. The inverse exists because
the finite irreducible chain reaches states with positive proposal rate.
The density of a proposal and the subsequent fixation probability give
\begin{equation}\label{eq:proposal-resolvent}
 S_\lambda^\pm=\lambda R_{\lambda,\theta}g_\pm,
\end{equation}
where
\[
 g_+(c)=\theta c_\alpha h_N(\beta,c-e_\alpha),\qquad
 g_-(c)=(1-\theta)c_\beta h_N(\alpha,c-e_\beta).
\]
As throughout, a term with zero leading count is zero.
Lemma~\ref{lem:sparse-resolvent} gives
\[
 \lambda R_{\lambda,\theta}g_\pm
 =\frac{\pi_Ng_\pm}{\pi_Nv_\theta}\1+O(\lambda)
\]
uniformly in $c$. Inserting \eqref{eq:transport} and
\eqref{eq:proposal-fraction} proves \eqref{eq:rare-success}.
\end{proof}

\begin{remark}\label{rem:colour-speeds}
The bias persists under a small change in the mutant clock.
Multiply the natural motion and killing rates of every mutant
particle by $t>0$, keep the resident rates unchanged, and let $f_t(j,d)$
be mutant fixation probability from one mutant at $j$ and resident
counts $d$. Copying inherits the donor's position and colour, including
the colour's speed. The finite absorption matrix is invertible for
every $t>0$, so
$f_t$ is rational and continuous in $t$, with $f_1=h_N$.
On each compact speed interval, fixation has a uniform exponential tail
by the marked copying argument in Lemma~\ref{lem:fixation}.

Define
\begin{equation}\label{eq:speed-port-coefficients}
\begin{aligned}
 U_N(t)&=a\sum_{c:c_\alpha>0}\pi_N(c)c_\alpha
 f_t(\beta,c-e_\alpha),\\
 D_N(t)&=b\sum_{c:c_\beta>0}\pi_N(c)c_\beta
 f_t(\alpha,c-e_\beta).
\end{aligned}
\end{equation}
If necessary, orient the port pair so that $\Delta_N>0$. Since
$U_N(1)-D_N(1)=\Delta_N$, continuity gives a compact interval
$K\subset(0,\infty)$ containing $1$ in its interior such that
\begin{equation}\label{eq:speed-bias}
 U_N(t_+)-D_N(t_-)\ge\tfrac34\Delta_N
 \qquad(t_+,t_-\in K).
\end{equation}
This includes a common mutant speed $t_+=t_-=t$, as well as reciprocal
speeds $t_+=r$, $t_-=r^{-1}$ for $r>1$ sufficiently close to one.
When two colour clocks are exchanged, normalization by the resident
clock gives reciprocal mutant speeds.
The proof of Proposition~\ref{prop:rare-success} applies with $f_{t_+}$
and $f_{t_-}$ in the two directions. Its leading imbalance is
$[U_N(t_+)-D_N(t_-)]/[a\E_{\pi_N}c_\alpha+b\E_{\pi_N}c_\beta]$.
The remainder is uniform over $K^2$, since the fixation probabilities
are bounded by one. Appendix~\ref{app:estimates} gives the uniform
resolvent bound and fixation-time estimate.
\end{remark}

\section{Relation to the multiparticle nonexplosion problem}\label{sec:discussion}

Proposition~\ref{prop:rare-success} turns ancestral circulation into a
directional bias for successful migration. This gives a route from the
finite transport theory to the multiparticle nonexplosion question posed by
Kwa\'{s}nicki~\citep[Remark~1.2]{Kwasnicki2024}. Let $X$ be a killed
Hunt process on $D$ with finite reference measure $m$. Write $P_t$ for
its sub-Markov semigroup, and suppose that it is self-dual:
\begin{equation}\label{eq:hunt-duality}
 \int_D fP_tg\,dm=\int_D gP_tf\,dm
 \qquad(t\ge0)
\end{equation}
for bounded nonnegative $f,g$. Write $\Prob^{\boldsymbol x}$ for the law
of the $N$-particle system started from $\boldsymbol x\in D^N$. Let
$\tau_n^{(N)}$ be the successive branching times and put
$\tau_\infty^{(N)}=\lim_{n\to\infty}\tau_n^{(N)}$. The question is
whether self-duality and finite reference mass imply
\begin{equation}\label{eq:multiparticle-question}
\begin{gathered}
 \Prob^{\boldsymbol x}(\tau_\infty^{(N)}=\infty)=1,\\
 \text{for }m^{\otimes N}\text{-almost every }
 \boldsymbol x\in D^N,\qquad N\ge3.
\end{gathered}
\end{equation}
Kwa\'{s}nicki's theorem establishes the two-particle case under these
symmetry and mass assumptions.

For larger populations, nonexplosion has been proved for Brownian motion
under geometric conditions on the domain
\citep{BieniekBurdzyFinch2012,GrigorescuKang2012} and for diffusions
under conditions controlling their drift and rebirth near the boundary
\citep{Villemonais2011,Villemonais2014}. Strong drift towards the killing
boundary can instead produce branching explosion
\citep{BieniekBurdzyPal2012}. The question in
\eqref{eq:multiparticle-question} asks what follows from symmetry and
finite reference mass alone.

The finite theory gives a stable directional mechanism for every
prescribed $N\ge3$. The speed perturbation \eqref{eq:speed-bias}
preserves the success imbalance, while
Proposition~\ref{prop:rare-success} and Appendix~\ref{app:estimates}
control rare proposals and the duration of fixation.

We are actively extending this method to self-dual Hunt processes with
finite reference measure through an infinite sequence of reversible
cells. The general construction and its implications for the
multiparticle nonexplosion problem will be presented in subsequent work.

\section*{Statements and Declarations}

\paragraph{Funding.}
The authors received no funding for this research.

\paragraph{Competing interests.}
The authors declare that they have no competing interests.

\paragraph{Data and code availability.}
Lean proof sources for the principal results and a Python script for
exact symbolic verification are provided in
Supplementary materials.

\paragraph{Ethics.}
The work is mathematical and involves no human participants or animal
subjects.

\paragraph{Use of AI tools.}
Parts of the Lean formalization were developed with assistance from
Tex2Lean. All formalized results used in this work were reviewed and
verified by the authors.
% The authors confirmed no funding and no competing interests.
% The user requested that author contributions be left unfilled.
% Submission-field requirements are recorded in notes/submission_information.md.

\bibliographystyle{spmpsci}
\bibliography{references}
\clearpage
\appendix
\section*{Appendices}
\addcontentsline{toc}{section}{Appendices}
The appendices give the explicit three-particle example, the full cubic
response calculation, and the finite estimates for rare transplants.
\section{The three-particle example}
\label{app:three}

Consider the complete graph on $I=\{0,1,2\}$ with
\[
q(i,j)=1\quad(i\ne j),\qquad
\nu_i=1,\qquad k=(1,2,4),\qquad N=3.
\]
The two ports are $0$ and $2$, with unit port rates.  The population
generator on $\Omega_3=\{c\in\mathbb N_0^3:|c|=3\}$ is
\begin{equation}
 L(c,c-e_i+e_j)=c_i+\frac{c_ic_jk_i}{2},\qquad i\ne j,
 \label{eq:app-three-generator}
\end{equation}
and its diagonal entries are minus the corresponding row sums.
Here $e_i$ denotes the unit vector at $i$.

\subsection{Stationary population weights}

We first determine the stationary population law.  Let
$Z_3=2121214$, and let $W(c)$ and $B(c)$ have the values in
Table~\ref{tab:app-three-stationary}.
\begin{table}[htbp]
\centering
\begin{tabular}{rrr}
\hline
$c$ & $W(c)$ & $B(c)$ \\
\hline
$(0,0,3)$ & $103091$ & $618546$ \\
$(0,1,2)$ & $113786$ & $1365432$ \\
$(0,2,1)$ & $157674$ & $1892088$ \\
$(0,3,0)$ & $219448$ & $1316688$ \\
$(1,0,2)$ & $138594$ & $1524534$ \\
$(1,1,1)$ & $191246$ & $2486198$ \\
$(1,2,0)$ & $264159$ & $2377431$ \\
$(2,0,1)$ & $239661$ & $2636271$ \\
$(2,1,0)$ & $329225$ & $2963025$ \\
$(3,0,0)$ & $364330$ & $2185980$ \\
\hline
\end{tabular}
\caption{Stationary weights and the corresponding outgoing mass.}
\label{tab:app-three-stationary}
\end{table}

For every $c\in\Omega_3$, these values satisfy
\begin{equation}
\begin{split}
 \sum_{c\in\Omega_3}W(c)&=Z_3,\\
 W(c)\sum_{i\ne j}\left(c_i+\frac{c_ic_jk_i}{2}\right)
 &=B(c)\\
 &=\sum_{\substack{i\ne j\\c_j>0}}
 W(c+e_i-e_j)
 \left[(c_i+1)+\frac{(c_i+1)(c_j-1)k_i}{2}\right].
\end{split}
\label{eq:app-three-balance}
\end{equation}
The last sum is the incoming mass at $c$.  For example, when
$c=(3,0,0)$, it is
$3\cdot329225+5\cdot239661=2185980=6\cdot364330$.
Substitution gives the remaining values of $B(c)$ as well.
Consequently $WL=0$ and $\sum_cW(c)=Z_3$.  By irreducibility,
the stationary law is
\begin{equation}
 \pi_3(c)=\frac{W(c)}{Z_3}.
 \label{eq:cert-pi3}
\end{equation}

\subsection{The founder equations and their solution}

Let $h_3(i,d)$ be the fixation probability of a specified founder at
$i$, when the other two particles have count vector $d$.  Put
$H_3=262336086$ and write
$u_i^{ab}=H_3h_3(i,e_a+e_b)$, with $u_i^{ab}=u_i^{ba}$.
The eighteen entries are listed in Table~\ref{tab:app-three-founders}.
\begin{table}[htbp]
\centering
\setlength{\tabcolsep}{4pt}
\small
\begin{tabular}{rrrrrrr}
\hline
$i\backslash d$ & $(0,0,2)$ & $(0,1,1)$ & $(0,2,0)$
 & $(1,0,1)$ & $(1,1,0)$ & $(2,0,0)$ \\
\hline
$0$ & $138983140$ & $121868982$ & $108221552$
 & $108047078$ & $96896756$ & $87445362$ \\
$1$ & $117160434$ & $100452990$ & $87445362$
 & $87483403$ & $77057267$ & $68542574$ \\
$2$ & $87445362$ & $72587826$ & $61430106$
 & $61676473$ & $52983701$ & $46241930$ \\
\hline
\end{tabular}
\caption{The integer values $H_3h_3(i,d)$.}
\label{tab:app-three-founders}
\end{table}

To determine $h_3$, we write its harmonic equations.  For a function
$h$ on $\{(i,d):|d|=2\}$, define
\begin{equation}
\begin{split}
 (Th)(i,d)={}&\sum_{j\ne i}[h(j,d)-h(i,d)]\\
 &+\sum_a d_a\sum_{b\ne a}
 [h(i,d-e_a+e_b)-h(i,d)]-k_ih(i,d)\\
 &+\sum_a d_ak_a\left[
 h(i,d-e_a+e_i)
 +\frac12\sum_b(d_b-\1_{\{a=b\}})h(i,d-e_a+e_b)
 -h(i,d)\right].
\end{split}
\label{eq:app-three-founder-operator}
\end{equation}
Terms with $d_a=0$ vanish.  When a particle at $a$ dies, the founder
is the donor with probability $1/2$.  Its two descendants account
for the first term in the brackets; the other donor accounts for
the second.  Both terms include copying at the same site.

Thus $Tu=0$ consists of the following six equations for each
$i\in\{0,1,2\}$:
\begin{equation}
\begin{aligned}
0={}&\sum_{j\ne i}u_j^{22}
 +2(u_i^{02}+u_i^{12})+8u_i^{i2}-(10+k_i)u_i^{22},\\
0={}&2\sum_{j\ne i}u_j^{12}
 +6u_i^{11}+4u_i^{22}+2u_i^{01}+2u_i^{02}
 +4u_i^{i2}+8u_i^{i1}-2(12+k_i)u_i^{12},\\
0={}&\sum_{j\ne i}u_j^{11}
 +2(u_i^{01}+u_i^{12})+4u_i^{i1}-(8+k_i)u_i^{11},\\
0={}&2\sum_{j\ne i}u_j^{02}
 +6u_i^{00}+3u_i^{22}+2u_i^{01}+2u_i^{12}
 +2u_i^{i2}+8u_i^{i0}-2(11+k_i)u_i^{02},\\
0={}&2\sum_{j\ne i}u_j^{01}
 +4u_i^{00}+3u_i^{11}+2u_i^{02}+2u_i^{12}
 +2u_i^{i1}+4u_i^{i0}-2(9+k_i)u_i^{01},\\
0={}&\sum_{j\ne i}u_j^{00}
 +2(u_i^{01}+u_i^{02})+2u_i^{i0}-(7+k_i)u_i^{00}.
\end{aligned}
\label{eq:app-three-eighteen-equations}
\end{equation}
We multiplied the equations for mixed $d$ by two.  The entries in
Table~\ref{tab:app-three-founders} satisfy all eighteen equations.
For example, the last one with $i=0$ reads
\[
 68542574+46241930+2(96896756+108047078)
 +2\cdot87445362=8\cdot87445362.
\]
The normalization identities are
\begin{equation}
 \sum_{i:c_i>0}c_i u_i^{\,c-e_i}=H_3,
 \qquad c\in\Omega_3,
 \label{eq:app-three-normalization}
\end{equation}
where $u_i^d$ denotes the table entry with column $d$.
Substitution verifies \eqref{eq:app-three-normalization} at each
of the ten configurations.  In particular, $u_0^{00}=H_3/3$.

It remains to identify this normalized solution with the fixation
probabilities.  Assign distinct ancestral labels to the particles,
and sum $u_i^d/H_3$ over those of a chosen colour.  By
\eqref{eq:app-three-founder-operator}, the resulting function is
harmonic for the coloured process.  Its boundary values are zero
at extinction and one at fixation, by
\eqref{eq:app-three-normalization}.  From every mixed state, two
specified deaths with the same surviving donor have positive
probability in a fixed time interval.  Since there are finitely
many states, the time to extinction or fixation has an exponential
tail.  Uniqueness of the harmonic extension now gives
\begin{equation}
 h_3(i,d)=\frac{u_i^d}{H_3}.
 \label{eq:cert-h3}
\end{equation}

\subsection{The resulting transport}

The stationary transplant coefficients are
\[
 F_{ij}^{(3)}=\sum_{c:c_i>0}\pi_3(c)c_i h_3(j,c-e_i).
\]
Using \eqref{eq:cert-pi3} and \eqref{eq:cert-h3}, we obtain
\begin{equation}
 \setlength{\arraycolsep}{4pt}
 \bigl(F_{ij}^{(3)}\bigr)_{i\ne j}
 =\frac{1}{278235489164202}
 \begin{pmatrix}
 *&113067935100535&78610412314267\\
 113068777926855&*&64147123261485\\
 78609569487947&64147966087805&*
 \end{pmatrix}.
 \label{eq:app-three-flux-matrix}
\end{equation}
The stars denote the diagonal entries, which do not enter the
calculation.  Each displayed entry is the sum of at most six terms,
with denominator $Z_3H_3=556470978328404$ before simplification.
In particular,
\begin{equation}
 \Delta_3=F_{02}^{(3)}-F_{20}^{(3)}
 =\frac{421413160}{139117744582101}>0.
 \label{eq:app-three-nonzero}
\end{equation}
The other two differences give
$J_{02}^{(3)}=J_{21}^{(3)}=J_{10}^{(3)}=\Delta_3$.

\clearpage
\section{The cubic transport coefficient for every \texorpdfstring{$N\ge4$}{N >= 4}}
\label{app:perturb}

Fix $N\ge4$, let $q(i,j)=1$ for $i\ne j$ on $I=\{0,1,2\}$,
and use ports $0$ and $2$ with unit port rates.  Throughout this
appendix,
\begin{equation}
 s_0=-1,\qquad s_1=0,\qquad s_2=1,\qquad
 k_i(\varepsilon)=1+\varepsilon s_i,\qquad |\varepsilon|<1.
 \label{eq:app-perturbation-family}
\end{equation}
Write $\pi_\varepsilon$ for the stationary population law and
$h_\varepsilon(i,d)$ for the probability that a specified founder
at $i$ fixes, given the other $N-1$ particles' count vector $d$.
Analyticity of these finite systems gives convergent expansions
near zero:
\begin{equation}
 \pi_\varepsilon=\sum_{r\ge0}\varepsilon^r\pi_r,
 \qquad
 h_\varepsilon=\sum_{r\ge0}\varepsilon^rh^{[r]},
 \qquad h^{[0]}=\frac1N.
 \label{eq:app-taylor-expansions}
\end{equation}
The bracketed superscript denotes Taylor order.  We determine the
founder coefficients through order three and use the stationary
equations to compute the corresponding transport coefficient.

\subsection{Founder coefficients}

For $|d|=N-1$, set
\[
 S=d_2-d_0,\qquad R=d_0+d_2.
\]
The first three coefficients are
\begin{equation}
\begin{aligned}
 h^{[1]}(i,d)&=\frac{S-(N-1)s_i}{N(3N-1)},\\
 h^{[2]}(i,d)&=a_2S^2+b_2R+c_2s_iS+\delta_2s_i^2,\\
 h^{[3]}(i,d)&=\beta_0S^3+\beta_1SR+\beta_2S+\beta_3s_iS^2\\
 &\quad+\beta_4s_iR+\beta_5s_i+\beta_6s_i^2S.
\end{aligned}
\label{eq:app-founder-coefficients}
\end{equation}
The four second order coefficients are
\begin{equation}
\begin{aligned}
 a_2&=\frac{1}{3N^2(3N-1)},&
 b_2&=\frac{-3N^2+2N-3}{3N^2(3N-1)^2},\\
 c_2&=\frac{2-N}{3N^2(3N-1)},&
 \delta_2&=\frac{3N^3-8N^2+9N-4}{3N^2(3N-1)^2}.
\end{aligned}
\label{eq:app-second-founder-coefficients}
\end{equation}
The seven third order coefficients are
\begin{equation}
\begin{aligned}
 \beta_0&=\frac{1}{3N^2(3N-1)(3N+1)},\\
 \beta_1&=-\frac{6N^3-4N^2+9N-1}
 {3N^3(3N-1)^2(3N+1)},\\
 \beta_2&=\frac{9N^6+21N^5-28N^4+30N^3+5N^2-15N+2}
 {3N^3(3N-2)(3N-1)^3(3N+1)},\\
 \beta_3&=-\frac{N-3}{3N^2(3N-1)(3N+1)},\\
 \beta_4&=\frac{9N^5-24N^4+23N^3-32N^2+22N-2}
 {3N^3(3N-2)(3N-1)^2(3N+1)},\\
 \beta_5&=-\frac{(N-1)(9N^6-33N^5+89N^4-135N^3+118N^2-44N+4)}
 {3N^3(3N-2)(3N-1)^3(3N+1)},\\
 \beta_6&=\frac{9N^5-54N^4+105N^3-90N^2+28N-2}
 {3N^3(3N-2)(3N-1)^2(3N+1)}.
\end{aligned}
\label{eq:app-third-founder-coefficients}
\end{equation}

To prove these formulas, let $\kappa=(\kappa_0,\kappa_1,\kappa_2)$
and write the founder operator on triples of polynomials
$H=(H_0,H_1,H_2)$ as
\begin{equation}
\begin{split}
 (\mathsf T_\kappa H)_i(S,R)={}&\sum_{j\ne i}[H_j(S,R)-H_i(S,R)]\\
 &+\sum_a d_a\sum_{b\ne a}
 [H_i(S+s_b-s_a,R+s_b^2-s_a^2)-H_i(S,R)]
 -\kappa_iH_i(S,R)\\
 &+\sum_a d_a\kappa_a\left[
 \frac{2}{N-1}H_i(S+s_i-s_a,R+s_i^2-s_a^2)\right.\\
 &\hspace{19mm}\left.
 +\sum_b\frac{d_b-\1_{\{a=b\}}}{N-1}
 H_i(S+s_b-s_a,R+s_b^2-s_a^2)-H_i(S,R)
 \right],
\end{split}
\label{eq:app-founder-shift}
\end{equation}
where
\begin{equation}
 d_0=\frac{R-S}{2},\qquad d_1=N-1-R,\qquad d_2=\frac{R+S}{2}.
 \label{eq:app-founder-count-coordinates}
\end{equation}
The six ordered changes of another particle have shifts
\[
\begin{array}{c|rrrrrr}
 a\to b&0\to1&0\to2&1\to0&1\to2&2\to0&2\to1\\
\hline
 (\delta_S,\delta_R)&(1,-1)&(2,0)&(-1,1)&(1,1)&(-2,0)&(-1,-1).
\end{array}
\]
Copying the founder gives the same shift with $b=i$.

Set
\[
 T_0=\mathsf T_{(1,1,1)},\qquad
 T_1=\mathsf T_{(-1,0,1)}-\mathsf T_{(0,0,0)}.
\]
Thus $T_\varepsilon=T_0+\varepsilon T_1$.  Comparing Taylor
coefficients in the founder equation gives
\begin{equation}
 T_0h^{[r]}+T_1h^{[r-1]}=0,
 \qquad h^{[r]}(0,(N-1)e_0)=0\quad(r\ge1).
 \label{eq:app-founder-recursion}
\end{equation}
The normalization follows from exchangeability of the $N$ labels when
all particles occupy one site.  At neutrality,
$T_01=0$, and every off diagonal entry of $T_0$ is nonnegative.
The natural motion terms connect all pairs $(i,d)$, so $T_0$ is
an irreducible finite generator.  Its kernel consists of constants.
It follows that each equation in \eqref{eq:app-founder-recursion},
with the indicated normalization, has at most one solution.

We verify \eqref{eq:app-founder-recursion} for the expressions in
\eqref{eq:app-founder-coefficients}.  Work over $\mathbb Q(N)$ and
expand the shifts by
\begin{equation}
 (S+u)^a(R+v)^b
 =\sum_{p=0}^a\sum_{q=0}^b
 \binom ap\binom bq S^pR^q u^{a-p}v^{b-q}.
 \label{eq:app-binomial-shifts}
\end{equation}
Substituting \eqref{eq:app-founder-coefficients} and
\eqref{eq:app-founder-count-coordinates} into
\eqref{eq:app-founder-shift} gives, for nonnegative $a,b$,
\begin{equation}
\begin{gathered}
 \bigl([S^aR^b](T_0h^{[r]}+T_1h^{[r-1]})_i\bigr)_{i=0}^2
 =(0,0,0),\\
 a+b\le r,\qquad r=1,2,3.
\end{gathered}
\label{eq:app-founder-residuals}
\end{equation}
There are no terms of higher degree.  Indeed, multiply the
coefficients obtained from \eqref{eq:app-binomial-shifts} by
\[
 D(N)=12N^3(N-1)(3N-2)(3N-1)^3(3N+1).
\]
For each $(a,b)$ in \eqref{eq:app-founder-residuals} and each $i$,
the resulting polynomial in $N$ has all coefficients zero.
For the first order terms this calculation reads
\begin{equation}
\begin{aligned}
 T_1h^{[0]}&=\frac{S-(N-1)s_i}{N(N-1)},\\
 T_0h^{[1]}&=-\frac{S-(N-1)s_i}{N(N-1)}.
\end{aligned}
 \label{eq:app-first-founder-check}
\end{equation}
For orders two and three, the same expansion involves monomials
of degree at most two and three, respectively.

The normalizations reduce to the scalar identities
\begin{equation}
\begin{aligned}
 a_2(N-1)^2+(b_2+c_2)(N-1)+\delta_2&=0,\\
 \beta_0(N-1)^3+(\beta_1+\beta_3)(N-1)^2&\\
 \quad +(\beta_2+\beta_4+\beta_6)(N-1)+\beta_5&=0.
\end{aligned}
\label{eq:app-founder-anchors}
\end{equation}
Substitution of the displayed fractions proves both identities.
The first order normalization follows at once from the formula for $h^{[1]}$.
Equations~\eqref{eq:app-founder-residuals} and
\eqref{eq:app-founder-anchors}, together with uniqueness of the
normalized equations, prove \eqref{eq:app-founder-coefficients}.
The founder normalization further gives
\begin{equation}
 \sum_{i:c_i>0}c_i h^{[r]}(i,c-e_i)=0,
 \qquad c\in\Omega_N,\quad r=1,2,3.
 \label{eq:app-taylor-founder-normalization}
\end{equation}

\subsection{The neutral population law}

For an integer $a\ge0$, define the rising and falling factorials
\[
 t^{\overline a}=\prod_{j=0}^{a-1}(t+j),\qquad
 (t)_{\underline a}=\prod_{j=0}^{a-1}(t-j),
\]
with empty products equal to one.  Set $\alpha=N-1$.
The neutral population law is
\begin{equation}
 \pi_0(c)=\frac{N!}{c_0!c_1!c_2!}
 \frac{\prod_{i=0}^2\alpha^{\overline{c_i}}}
 {(3\alpha)^{\overline N}},\qquad |c|=N.
 \label{eq:app-neutral-law}
\end{equation}
For positive integer parameters $\alpha_0,\alpha_1,\alpha_2$,
the identity
\begin{equation}
 \sum_{|c|=M}\frac{M!}{c_0!c_1!c_2!}
 \prod_{i=0}^2\alpha_i^{\overline{c_i}}
 =(\alpha_0+\alpha_1+\alpha_2)^{\overline M}
 \label{eq:app-rising-convolution}
\end{equation}
follows by comparing coefficients of $z^M$ in
$\prod_i(1-z)^{-\alpha_i}$.
Here
$(1-z)^{-\alpha_i}=\sum_{r\ge0}\alpha_i^{\overline r}z^r/r!$,
as follows by multiplying $\alpha_i$ geometric series.  Taking
$M=N$ and all three parameters equal to $\alpha$ shows that
\eqref{eq:app-neutral-law} is a probability measure.
For $c'=c-e_i+e_j$,
\begin{equation}
 \frac{\pi_0(c')}{\pi_0(c)}
 =\frac{c_i(N-1+c_j)}{(c_j+1)(N-2+c_i)}
 =\frac{L_0(c,c')}{L_0(c',c)}.
 \label{eq:app-neutral-detailed-balance}
\end{equation}
This proves stationarity, and irreducibility gives uniqueness.

Define $\Phi_r(f)=\sum_c\pi_r(c)f(c)$.  Then
$\Phi_0(1)=1$ and $\Phi_r(1)=0$ for $r\ge1$.
Multiplying \eqref{eq:app-neutral-law} by factorials and shifting
the count variables gives, for $A=a_0+a_1+a_2\le N$,
\begin{equation}
 \Phi_0\left(\prod_{i=0}^2(c_i)_{\underline{a_i}}\right)
 =(N)_{\underline A}
 \frac{\prod_{i=0}^2\alpha^{\overline{a_i}}}
 {(3\alpha)^{\overline A}}.
 \label{eq:app-neutral-factorial-moments}
\end{equation}
Indeed, set $c'_i=c_i-a_i$ and use
$\alpha^{\overline{c_i}}
=\alpha^{\overline{a_i}}(\alpha+a_i)^{\overline{c'_i}}$.
The sum over $|c'|=N-A$ is evaluated by
\eqref{eq:app-rising-convolution}, with parameters
$\alpha+a_i$.  The resulting
$(3\alpha+A)^{\overline{N-A}}$ cancels the last $N-A$ factors
in the original denominator.

For ordinary powers, define integers $\mathfrak s_{ra}$ by
\begin{equation}
\begin{split}
 &\mathfrak s_{00}=1,\qquad \mathfrak s_{r0}=0\quad(r>0),
 \qquad \mathfrak s_{ra}=0\quad(a<0\text{ or }a>r),\\
 &\mathfrak s_{r+1,a}=a\mathfrak s_{ra}+\mathfrak s_{r,a-1}.
\end{split}
\label{eq:app-power-factorial-recursion}
\end{equation}
The identity
$t(t)_{\underline a}=(t)_{\underline{a+1}}+a(t)_{\underline a}$
proves by induction that
$t^r=\sum_{a=0}^r\mathfrak s_{ra}(t)_{\underline a}$.
With $x=c_0$ and $y=c_2$, it follows that
\begin{equation}
 \Phi_0(x^ry^s)=
 \sum_{a=0}^r\sum_{b=0}^s
 \mathfrak s_{ra}\mathfrak s_{sb}(N)_{\underline{a+b}}
 \frac{(N-1)^{\overline a}(N-1)^{\overline b}}
 {[3(N-1)]^{\overline{a+b}}},\qquad r+s\le4.
 \label{eq:app-neutral-ordinary-moments}
\end{equation}
For orders up to four, the coefficients are
\[
\begin{array}{c|rrrrr}
 r\backslash a&0&1&2&3&4\\
\hline
0&1&0&0&0&0\\
1&0&1&0&0&0\\
2&0&1&1&0&0\\
3&0&1&3&1&0\\
4&0&1&7&6&1.
\end{array}
\]
Thus \eqref{eq:app-neutral-ordinary-moments} determines $\Phi_0$
on the polynomials of degree at most four.

\subsection{The stationary equations}

Let $z=N-x-y$ and write
$L_\varepsilon=L_0+\varepsilon L_1$. Table~\ref{tab:app-population-shifts} gives
all six shifts and their two coefficients.

\begin{table}[htbp]
\centering
\begin{tabular}{cccc}
\hline
event & $(\delta_x,\delta_y)$ & $\ell_0(x,y)$ & $\ell_1(x,y)$\\
\hline
$0\to1$ & $(-1,0)$ & $x+xz/(N-1)$ & $-xz/(N-1)$\\
$0\to2$ & $(-1,1)$ & $x+xy/(N-1)$ & $-xy/(N-1)$\\
$1\to0$ & $(1,0)$ & $z+zx/(N-1)$ & $0$\\
$1\to2$ & $(0,1)$ & $z+zy/(N-1)$ & $0$\\
$2\to0$ & $(1,-1)$ & $y+yx/(N-1)$ & $yx/(N-1)$\\
$2\to1$ & $(0,-1)$ & $y+yz/(N-1)$ & $yz/(N-1)$\\
\hline
\end{tabular}
\caption{The neutral and first order population shifts.}
\label{tab:app-population-shifts}
\end{table}

For $v=0,1$, the associated operator is
\begin{equation}
 L_vf(x,y)=\sum_{\text{six rows}}
 \ell_v(x,y)[f(x+\delta_x,y+\delta_y)-f(x,y)].
 \label{eq:app-shift-generator}
\end{equation}

Let $\mathcal P_d$ denote the polynomials in $x,y$ of total
degree at most $d$.  Finite differences in
\eqref{eq:app-shift-generator} give
\begin{equation}
 L_0\mathcal P_d\subseteq\mathcal P_d,
 \qquad L_1\mathcal P_d\subseteq\mathcal P_{d+1}.
 \label{eq:app-degree-bounds}
\end{equation}
On the highest homogeneous part of degree $d\ge1$, $L_0$ acts
by the scalar
\begin{equation}
 \lambda_d=-3d-\frac{d(d-1)}{N-1}
 =-\frac{d(3N+d-4)}{N-1}.
 \label{eq:app-leading-eigenvalues}
\end{equation}
To establish this, the highest degree contribution of natural
motion is $-3(x\partial_x+y\partial_y)$.
The opposite neutral copying shifts cancel the first derivative
terms.  Their second derivative contribution at highest degree is
\[
 -\frac{x^2\partial_{xx}+2xy\partial_{xy}+y^2\partial_{yy}}{N-1}.
\]
Terms of derivative order at least three have degree below $d$.
Euler's identities for homogeneous polynomials now give
\eqref{eq:app-leading-eigenvalues}.  For $d=1,2,3,4$, the four
scalars are respectively
\[
 -3,\qquad -\frac{2(3N-2)}{N-1},\qquad
 -\frac{3(3N-1)}{N-1},\qquad -\frac{12N}{N-1}.
\]

Evaluation of $\mathcal P_d$ on the population simplex is
injective for $d\le N$.  Order the factorial basis
$(x)_{\underline a}(y)_{\underline b}$ and the evaluation points
$(a,b)$ by total degree.  A basis element vanishes at a point
of smaller total degree.  Within a fixed degree, it vanishes
unless the basis indices equal the point, where its value is
$a!b!$.  The evaluation matrix on $x+y\le d$ is thus triangular
with nonzero diagonal.  In particular, the fifteen monomials
of $\mathcal P_4$ are linearly independent as population
functions for every $N\ge4$.

\begin{lemma}
\label{lem:app-polynomial-poisson}
For $f\in\mathcal P_d$, $d\le4$, there is a unique
$\Psi_f\in\mathcal P_d$ with zero constant coefficient such that
\begin{equation}
 L_0\Psi_f=f-\Phi_0(f).
 \label{eq:app-poisson-equation}
\end{equation}
It is obtained by the following finite descent.  Initialize
$r=f-\Phi_0(f)$ and $\Psi=0$.  For $j=d,d-1,\ldots,1$, let
$[r]_j$ be the degree $j$ homogeneous part of $r$ and set
\begin{equation}
 \psi_j=\frac{[r]_j}{\lambda_j},\qquad
 \Psi\leftarrow\Psi+\psi_j,\qquad
 r\leftarrow r-L_0\psi_j.
 \label{eq:app-poisson-descent}
\end{equation}
\end{lemma}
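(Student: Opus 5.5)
The plan is to prove existence by running the descent \eqref{eq:app-poisson-descent} and checking an invariant at each step, then to prove uniqueness from the kernel of $L_0$ on population functions. Throughout I use the degree bounds \eqref{eq:app-degree-bounds} and the leading scalars \eqref{eq:app-leading-eigenvalues}. The scalars $\lambda_j$ are nonzero for $1\le j\le4$ and $N\ge4$, since $\lambda_j=-j(3N+j-4)/(N-1)$.

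\emph{Existence.} The invariant to maintain is that, before the step with index $j$, the remainder $r$ has degree at most $j$, satisfies $\Phi_0(r)=0$, and obeys $r=f-\Phi_0(f)-L_0\Psi$.
\begin{itemize}
\item Initially this holds with $j=d$.
\item At step $j$, the polynomial $\psi_j=[r]_j/\lambda_j$ is homogeneous of degree $j$. By \eqref{eq:app-leading-eigenvalues}, the degree-$j$ part of $L_0\psi_j$ is $\lambda_j\psi_j=[r]_j$. By \eqref{eq:app-degree-bounds}, $L_0\psi_j$ has no part of degree above $j$. Hence $r-L_0\psi_j$ has degree at most $j-1$.
\item The mean stays zero, because $\Phi_0(L_0g)=\pi_0L_0g=0$ by stationarity of $\pi_0$.
\end{itemize}
After the step $j=1$, the remainder is a constant with zero $\pi_0$-mean, so it vanishes. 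This gives $L_0\Psi=f-\Phi_0(f)$. Each $\psi_j$ is homogeneous of positive degree, so $\Psi\in\mathcal P_d$ has zero constant coefficient.

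One point needs care. The scalar action \eqref{eq:app-leading-eigenvalues} is a statement about formal polynomials: the shift operator \eqref{eq:app-shift-generator} is applied to polynomial extensions, with coefficients $\ell_0$ that are themselves polynomials. I would note that applying the operator to polynomials and then restricting to $\Omega_N$ agrees with applying the generator to the restricted functions. The reason is that boundary transitions carry a zero factor $x$, $y$ or $z$ in $\ell_0$. This is the same convention the paper uses for $G_r$.

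\emph{Uniqueness.} Suppose $\Psi,\Psi'$ both satisfy the conditions, and put $D=\Psi-\Psi'\in\mathcal P_d$. Then $L_0D=0$ on $\Omega_N$. The neutral count chain is irreducible, so $D$ is constant on $\Omega_N$. For $d\le4\le N$, evaluation of $\mathcal P_d$ on the simplex is injective, by the triangular factorial-basis argument just given. So $D$ equals that constant as a polynomial. Its constant coefficient is zero, hence $D=0$.

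\emph{Main obstacle.} The only delicate step is justifying that the boundary-restricted generator agrees with the formal shift operator, so that polynomial identities transfer to $\Omega_N$. I would verify this row by row from Table~\ref{tab:app-population-shifts}: each rate vanishes exactly where its shift would leave the simplex.
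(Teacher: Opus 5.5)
Your proposal is correct and follows the paper's proof. The descent removes the top homogeneous part using the scalar $\lambda_j$; stationarity of $\pi_0$ gives each $L_0\psi_j$ zero mean, so the final constant remainder vanishes; uniqueness comes from irreducibility together with the zero constant coefficient. Your explicit use of injectivity of evaluation on the simplex for $d\le N$, and your row-by-row check that the formal shift operator agrees with the generator on $\Omega_N$, spell out two steps that the paper leaves implicit.
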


\begin{proof}
Equation~\eqref{eq:app-leading-eigenvalues} removes the degree
$j$ part at the indicated step.  Equation
\eqref{eq:app-degree-bounds} ensures that no higher degree is
introduced.  At the end the remainder is constant.  It has
mean zero because its initial mean is zero and
$\Phi_0(L_0\psi_j)=0$ at every step.  The constant remainder
is therefore zero.  Each $\psi_j$ has zero constant coefficient.
For uniqueness, the difference of two solutions is annihilated
by $L_0$, and irreducibility makes it constant; its prescribed
constant coefficient is zero.
\end{proof}

For example, $L_0x=N-3x$ and $\Phi_0(x)=N/3$, so
$\Psi_x=-x/3$.  Taking Taylor coefficients in
$\pi_\varepsilon L_\varepsilon=0$ gives
$\pi_rL_0=-\pi_{r-1}L_1$.  Apply this identity to
\eqref{eq:app-poisson-equation} and use $\Phi_r(1)=0$ to obtain
\begin{equation}
 \Phi_r(f)=-\Phi_{r-1}(L_1\Psi_f).
 \label{eq:app-stationary-recursion}
\end{equation}
With
\begin{equation}
 \mathcal Rf=-L_1\Psi_f,
 \qquad \Phi_r(f)=\Phi_0(\mathcal R^rf),
 \label{eq:app-response-operator}
\end{equation}
we can express the stationary response in terms of neutral
moments.  The second identity holds whenever the successive
Poisson equations have inputs of degree at most four.  Since
$L_1$ annihilates constants, the response is independent of the
additive constant in the Poisson solution.

To write the descent in coordinates, let
$f=\sum_{a+b\le d}f_{ab}x^ay^b$.  Formula
\eqref{eq:app-shift-generator} acts on each monomial by
\begin{equation}
 L_v(x^ay^b)=\sum_{\text{six rows}}\ell_v
 \sum_{\substack{0\le p\le a,\ 0\le q\le b\\(p,q)\ne(a,b)}}
 \binom ap\binom bq
 x^py^q\delta_x^{a-p}\delta_y^{b-q}.
 \label{eq:app-monomial-generator}
\end{equation}
Equations~\eqref{eq:app-neutral-ordinary-moments},
\eqref{eq:app-poisson-descent}, and
\eqref{eq:app-monomial-generator} determine its coefficients by
finite sums and division by the four nonzero $\lambda_j$.

\subsection{The cubic coefficient}

For $r=0,1,2,3$ define
\begin{equation}
 G_r(x,y)=x h^{[r]}(2,c-e_0)-y h^{[r]}(0,c-e_2),
 \qquad c=(x,N-x-y,y).
 \label{eq:app-transport-polynomials}
\end{equation}
The two substitutions in the founder formulas are
\begin{equation}
\begin{array}{c|ccc}
 &S&R&s_i\\
\hline
 h^{[r]}(2,c-e_0)&y-x+1&x+y-1&1\\
 h^{[r]}(0,c-e_2)&y-x-1&x+y-1&-1.
\end{array}
\label{eq:app-transplant-substitutions}
\end{equation}
These are polynomials of degree at most $r+1$.  At $x=0$ or
$y=0$, the corresponding prefactor vanishes, so they agree with
the transplant expression on the entire population simplex.

Put $t=y-x$ and $u=x+y$.  Expanding the founder formulas gives
\begin{equation}
\begin{aligned}
 G_0={}&-\frac{t}{N},\\
 G_1={}&-\frac{t^2+(N-2)u}{N(3N-1)},\\
 G_2={}&t\bigl[-a_2t^2+(2a_2+c_2-b_2)u\\
 &\qquad\quad-a_2+b_2-c_2-\delta_2\bigr],\\
 G_3={}&-\beta_0t^4
 +(3\beta_0+\beta_3-\beta_1)ut^2\\
 &\quad+(-3\beta_0+\beta_1-\beta_2-2\beta_3-\beta_6)t^2\\
 &\quad +(\beta_1+\beta_4)u^2
 +(\beta_0-\beta_1+\beta_2+\beta_3-\beta_4
       +\beta_5+\beta_6)u.
\end{aligned}
\label{eq:app-expanded-transport-polynomials}
\end{equation}
Expansion of the stationary transport coefficient gives
\begin{equation}
 [\varepsilon^3]\Delta_N
 =\Phi_0(G_3)+\Phi_1(G_2)+\Phi_2(G_1)+\Phi_3(G_0).
 \label{eq:app-cubic-decomposition}
\end{equation}
The four terms are exactly
\begin{equation}
\begin{aligned}
 \Phi_0(G_3)
 &=\frac{2(N-1)P_0}{9N(3N-2)^2(3N-1)^3(3N+1)},\\
 \Phi_1(G_2)
 &=\frac{2(N-1)^2P_1}{9N(3N-2)^2(3N-1)^3},\\
 \Phi_2(G_1)
 &=-\frac{2(N-1)^2P_2}{9(3N-2)^3(3N-1)^3},\\
 \Phi_3(G_0)
 &=-\frac{2(N-1)P_3}{9(3N-2)^3(3N-1)^2},
\end{aligned}
\label{eq:app-four-contributions}
\end{equation}
where
\begin{equation}
\begin{aligned}
 P_0={}&27N^6-144N^5+315N^4-335N^3+42N^2+79N-16,\\
 P_1={}&27N^4-66N^3+17N^2+34N-10,\\
 P_2={}&81N^4-279N^3+369N^2-236N+57,\\
 P_3={}&9N^4-45N^3+71N^2-54N+15.
\end{aligned}
\label{eq:app-contribution-numerators}
\end{equation}

We now compute the four terms.  The Poisson solutions we need are
\begin{equation}
\begin{aligned}
 \Psi_t={}&-\frac t3,&\Psi_u={}&-\frac u3,\\
 \Psi_{t^2}={}&-\frac{N-1}{2(3N-2)}t^2
              -\frac{5N-3}{6(3N-2)}u,\\
 \Psi_{tu}={}&-\frac{N-1}{2(3N-2)}tu
              -\frac{(N+1)(2N-1)}{6(3N-2)}t,\\
 \Psi_{t^3}={}&-\frac{N-1}{3(3N-1)}t^3
              -\frac{(N-1)(5N-3)}{2(3N-2)(3N-1)}tu\\
 &\quad-\frac{22N^3-39N^2+30N-9}{6(3N-2)(3N-1)}t.
\end{aligned}
\label{eq:app-five-poisson-solutions}
\end{equation}
Applying the six shifts to the right-hand sides gives
$f-\Phi_0(f)$ in each case.  The required neutral moments are
\begin{equation}
\begin{aligned}
 \Phi_0(u)&=\frac{2N}{3},&
 \Phi_0(t^2)&=\frac{2N(4N-3)}{3(3N-2)},\\
 \Phi_0(u^2)&=\frac{2N(2N^2-1)}{3(3N-2)},\\
 \Phi_0(ut^2)&=
 \frac{2N(N+1)(2N-1)(4N-3)}{3(3N-2)(3N-1)},\\
 \Phi_0(t^4)&=
 \frac{2N(4N-3)(8N^2-9N+3)}{3(3N-2)(3N-1)}.
\end{aligned}
\label{eq:app-five-neutral-moments}
\end{equation}
They follow from \eqref{eq:app-neutral-ordinary-moments} after
expanding $t=y-x$ and $u=x+y$.

For the response operator, set
\begin{equation}
\begin{aligned}
 \xi_0&=\frac1{3(N-1)},&
 \xi_1&=-\frac N{3(N-1)},\\
 \zeta_0&=\frac1{3N-2},&
 \zeta_1&=-\frac{N(3N-4)}{3(N-1)(3N-2)},\\
 \zeta_2&=-\frac{N^2}{3(N-1)(3N-2)}.
\end{aligned}
\label{eq:app-reduced-response-coefficients}
\end{equation}
The first three Poisson solutions and the shift table give
\begin{equation}
\begin{aligned}
 \mathcal Rt&=\xi_0t^2+\xi_1u,\\
 \mathcal Ru&=\xi_0tu+\xi_1t,\\
 \mathcal R(t^2)&=\zeta_0t^3+\zeta_1tu+\zeta_2t.
\end{aligned}
 \label{eq:app-reduced-response-map}
\end{equation}
The remaining responses enter only through their neutral means.
Let $K_1=\Phi_0(\mathcal Rt)$,
$K_2=\Phi_0(\mathcal R(tu))$, and
$K_3=\Phi_0(\mathcal R(t^3))$.  Applying $-L_1$ to
\eqref{eq:app-five-poisson-solutions} and using
\eqref{eq:app-five-neutral-moments} gives
\begin{equation}
\begin{aligned}
 K_1&=-\frac{2N(N-1)}{3(3N-2)},\\
 K_2&=-\frac{2N(N-1)(6N^3-N^2-6N+3)}
 {3(3N-2)^2(3N-1)},\\
 K_3&=-\frac{2N(N-1)(72N^4-165N^3+157N^2-75N+15)}
 {3(3N-2)^2(3N-1)^2}.
\end{aligned}
\label{eq:app-response-means}
\end{equation}
For the first formula, we obtain
$K_1=\xi_0\Phi_0(t^2)+\xi_1\Phi_0(u)$.
For $K_2$ and $K_3$, we apply the same shifts to $\Psi_{tu}$
and $\Psi_{t^3}$, respectively.

The first row of \eqref{eq:app-four-contributions} follows
by taking the five moments in
\eqref{eq:app-five-neutral-moments} in the last line of
\eqref{eq:app-expanded-transport-polynomials}.
For the remaining rows,
\eqref{eq:app-response-operator} and
\eqref{eq:app-reduced-response-map} give the explicit scalar
identities
\begin{equation}
\begin{aligned}
 \Phi_1(G_2)={}&-a_2K_3+(2a_2+c_2-b_2)K_2\\
 &\quad+(-a_2+b_2-c_2-\delta_2)K_1,\\
 \Phi_2(G_1)={}&-\frac{1}{N(3N-1)}\bigl[
 \zeta_0K_3+(\zeta_1+(N-2)\xi_0)K_2\\
 &\hspace{31mm}+(\zeta_2+(N-2)\xi_1)K_1\bigr],\\
 \Phi_3(G_0)={}&-\frac1N\bigl[
 \xi_0\zeta_0K_3+\xi_0(\zeta_1+\xi_1)K_2\\
 &\hspace{14mm}+(\xi_0\zeta_2+\xi_1^2)K_1\bigr].
\end{aligned}
\label{eq:app-scalar-contribution-calculation}
\end{equation}
Substitution of \eqref{eq:app-second-founder-coefficients},
\eqref{eq:app-reduced-response-coefficients}, and
\eqref{eq:app-response-means}, followed by a common denominator
in each row, yields the last three fractions in
\eqref{eq:app-four-contributions}.

In the coordinates $x,y$, the same calculation takes the form
\begin{equation}
\begin{split}
 \Phi_{j}(G_{3-j})
 ={}&\sum_{a+b\le4}[x^ay^b](\mathcal R^jG_{3-j})
 \sum_{p=0}^a\sum_{q=0}^b
 \mathfrak s_{ap}\mathfrak s_{bq}\\
 &\qquad\times(N)_{\underline{p+q}}
 \frac{(N-1)^{\overline p}(N-1)^{\overline q}}
 {[3(N-1)]^{\overline{p+q}}},\qquad 0\le j\le3.
\end{split}
 \label{eq:app-contribution-finite-sums}
\end{equation}
The degree paths are
\[
 G_3:\ 4;\qquad
 G_2:\ 3\to4;\qquad
 G_1:\ 2\to3\to4;\qquad
 G_0:\ 1\to2\to3\to4.
\]
Every input to $L_1$ has degree at most three, and the final
neutral moment has degree at most four.  Thus all Poisson
equations used in the calculation fall within
Lemma~\ref{lem:app-polynomial-poisson}.

The common denominator in
\eqref{eq:app-cubic-decomposition} is
$9N(3N-2)^3(3N-1)^3(3N+1)$.  Its numerator simplifies by
the integer polynomial identity
\begin{equation}
\begin{split}
 &2(N-1)(3N-2)P_0
 +2(N-1)^2(3N-2)(3N+1)P_1\\
 &\quad-2N(N-1)^2(3N+1)P_2
 -2N(N-1)(3N-1)(3N+1)P_3\\
 &\hspace{12mm}=-2(N-3)(N-2)(N-1)^2(3N-2)(3N-1).
\end{split}
\label{eq:app-cubic-factorization}
\end{equation}
Combining the four terms and cancelling the two displayed
factors proves
\begin{equation}
 [\varepsilon^3]\Delta_N
 =-\frac{2(N-3)(N-2)(N-1)^2}
 {9N(3N-2)^2(3N-1)^2(3N+1)}.
 \label{eq:app-cubic-coefficient}
\end{equation}

Finally, symmetry of the neutral law gives $\Phi_0(G_0)=0$.
The moments in \eqref{eq:app-five-neutral-moments} give
$\Phi_0(G_1)=-2(N-1)/[3(3N-2)]$.
Also $\Psi_{G_0}=t/(3N)$, and the shift table gives
\begin{equation}
 \mathcal RG_0=\frac{Nu-t^2}{3N(N-1)},\qquad
 \Phi_1(G_0)=\frac{2(N-1)}{3(3N-2)}.
 \label{eq:app-linear-cancellation}
\end{equation}
The two first order terms cancel.  Exchanging sites $0$ and
$2$ replaces $\varepsilon$ by $-\varepsilon$ and reverses
the port difference.  Uniqueness of the stationary and
founder solutions gives
$\Delta_N(-\varepsilon)=-\Delta_N(\varepsilon)$.
All even Taylor coefficients vanish.  The resulting expansion is
\begin{equation}
 \Delta_N(\varepsilon)=-C_N\varepsilon^3+O(\varepsilon^5),
 \qquad
 C_N=\frac{2(N-3)(N-2)(N-1)^2}
 {9N(3N-2)^2(3N-1)^2(3N+1)}>0.
 \label{eq:app-cubic-expansion}
\end{equation}
For each fixed $N\ge4$, this gives a strictly negative
transport for all sufficiently small positive $\varepsilon$.

The Python script in Supplementary materials checks the
founder identities \eqref{eq:app-founder-residuals} and
\eqref{eq:app-founder-anchors}, reconstructs the four contributions
\eqref{eq:app-four-contributions} from the shifts
\eqref{eq:app-shift-generator}, the moments
\eqref{eq:app-neutral-ordinary-moments}, and the Poisson descent
\eqref{eq:app-poisson-descent}, and verifies the factorization
\eqref{eq:app-cubic-factorization}. All checks use exact arithmetic
over $\mathbb Q(N)$, with $N$ an indeterminate.

\clearpage
\section{Finite mixing and sparse proposal estimates}\label{app:estimates}

We use the supremum norm for functions and the induced norm for
operators. A probability $\pi$ acts
on a function by $\pi f=\sum_c\pi(c)f(c)$.

\subsection{A centred inverse for the population chain}

\begin{lemma}\label{lem:finite-mixing}
For a finite irreducible generator $L$ with stationary probability $\pi$,
the projection $\Pi f=(\pi f)\1$ satisfies
\begin{equation}\label{eq:finite-mixing}
\begin{gathered}
 \|e^{tL}-\Pi\|\le C e^{-\eta t},\qquad
 B_0=\int_0^\infty(e^{tL}-\Pi)\,dt,\\
 LB_0=B_0L=\Pi-I
\end{gathered}
\end{equation}
for some $C,\eta>0$.
\end{lemma}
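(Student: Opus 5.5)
The plan is to use the spectral structure of a finite irreducible generator together with the Perron--Frobenius (Doeblin) property of the semigroup $e^{tL}$. First I will show that $e^{tL}$ converges to $\Pi$ exponentially fast. Irreducibility on a finite state space implies that $e^{tL}$ has strictly positive entries for every $t>0$. To see this, write $L=\Lambda(P-I)$ by uniformization, where $P$ is a stochastic matrix. Then $e^{tL}=e^{-\Lambda t}\sum_n(\Lambda t)^nP^n/n!$. Each pair of states is joined by some power $P^n$ with a positive entry, and the diagonal of $P$ can be made positive by taking $\Lambda$ strictly larger than the largest exit rate.

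Fix $t_0=1$ and set $\delta=\min_{c,c'}e^{L}(c,c')>0$. The Doeblin minorization $e^{L}(c,\cdot)\ge\delta\,\1$ gives a total-variation contraction: for any two probability vectors $\mu,\mu'$,
\[
\|\mu e^{L}-\mu' e^{L}\|_{TV}\le(1-\delta|\,\Omega|)\,\|\mu-\mu'\|_{TV}.
\]
Here $|\Omega|\delta\le1$, and I may shrink $\delta$ so that the contraction factor is $1-\delta'$ with $0<\delta'<1$. Since $\pi e^{tL}=\pi$, I apply this with $\mu=\delta_c$ and $\mu'=\pi$ at integer times, using the semigroup property. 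For real $t$, I write $t=n+s$ with $s\in[0,1)$; $e^{sL}$ is a contraction in the supremum norm and commutes with $\Pi$. This yields
\[
\|e^{tL}-\Pi\|=\sup_c\sum_{c'}|e^{tL}(c,c')-\pi(c')|\le 2(1-\delta')^{\lfloor t\rfloor},
\]
which is the bound in the statement with $C=2/(1-\delta')$ and $\eta=-\log(1-\delta')$.

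Next, exponential decay makes $B_0=\int_0^\infty(e^{tL}-\Pi)\,dt$ absolutely convergent in operator norm. For the identities, I use $L\Pi=0$, because $L\1=0$, and $\Pi L=0$, because $\pi L=0$. Hence $L(e^{tL}-\Pi)=\frac{d}{dt}e^{tL}=(e^{tL}-\Pi)L$. Integrating from $0$ to $T$ gives
\[
LB_0^{(T)}=B_0^{(T)}L=e^{TL}-I,\qquad B_0^{(T)}=\int_0^T(e^{tL}-\Pi)\,dt.
\]
Letting $T\to\infty$ and using the first part, $e^{TL}\to\Pi$, so $LB_0=B_0L=\Pi-I$. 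Passing to the limit is justified because $L$ is a bounded finite matrix, so multiplication by $L$ commutes with norm limits.

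The only step needing care is the strict positivity of $e^{L}$, which is needed for the uniform minorization. I would handle it by the uniformization argument above, choosing the uniformization rate large enough to give $P$ a positive diagonal. This positive diagonal makes positivity of some power $P^n$ at a common $n$ unnecessary, since every term $n\ge0$ appears in the exponential series anyway. The rest is routine.
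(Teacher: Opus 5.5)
Your proof is correct and follows essentially the paper's route: uniformization plus a Doeblin minorization that gives a total-variation contraction, followed by integrating $\frac{d}{dt}e^{tL}$ and using $L\Pi=\Pi L=0$. The only difference is where the minorization is applied. You minorize the time-one kernel $e^{L}$ and pass to real $t$ through the semigroup property and the sup-norm contraction of $e^{sL}$; this needs no common path length, and in fact no positive diagonal, since $P\ge0$ suffices. The paper instead minorizes the skeleton $K^{M-1}$, transfers to continuous time through the Poisson series, and also derives existence and positivity of $\pi$ from the contraction, which you take as given.
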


\begin{proof}
The assertion is immediate when there is one state, with $B_0=0$.
Suppose there are $M\ge2$ states. Choose
$\Lambda>\max_c[-L(c,c)]$ and put
$K=I+L/\Lambda$. This stochastic matrix has a positive diagonal.
Any two states are connected by a positive path of length at most
$r=M-1$. Since the diagonal is positive, we can extend each path
to length $r$, and therefore $K^r(c,d)>0$ for all $c,d$.
Let $\epsilon=\min_{c,d}K^r(c,d)>0$ and
$\rho=\min(M\epsilon,1/2)$. Each row of $K^r$ is a mixture of the
uniform probability, with weight $\rho$, and another probability.
It follows that right multiplication by $K^r$ contracts differences
of probability measures in total variation by the factor $1-\rho$.

Ces\`aro averages of any initial distribution have a stationary
subsequential limit in the finite probability simplex. The contraction
gives uniqueness and exponential convergence to that limit. Its
coordinates are strictly positive because $K^r$ is strictly positive.
Uniformization,
\[
 e^{tL}=e^{-\Lambda t}\sum_{n\ge0}\frac{(\Lambda t)^n}{n!}K^n,
\]
gives the exponential bound in \eqref{eq:finite-mixing}, and the
integral defining $B_0$ converges. Integrating the derivative of
$e^{tL}$ gives $LB_0=B_0L=\Pi-I$.
\end{proof}

\subsection{Sparse proposals}

\begin{lemma}\label{lem:sparse-resolvent}
Fix a finite cell, $N$ and two distinct ports. For a compact interval
$J\subset(0,1)$, let
\[
 v_\theta(c)=\theta c_\alpha+(1-\theta)c_\beta,
 \qquad R_{\lambda,\theta}=(\lambda M_{v_\theta}-L_N)^{-1}.
\]
The inverse exists for every $\lambda>0$ and $\theta\in J$. There are
$\lambda_0,C_R>0$ such that
\begin{equation}\label{eq:sparse-resolvent-bound}
 \left\|\lambda R_{\lambda,\theta}g
 -\frac{\pi_Ng}{\pi_Nv_\theta}\1\right\|\le C_R\lambda
\end{equation}
for $0<\lambda\le\lambda_0$, $\theta\in J$ and $\|g\|\le N$.
In particular, $\lambda R_{\lambda,\theta}\1$ is uniformly bounded.
\end{lemma}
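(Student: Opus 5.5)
The plan is a singular perturbation of the finite resolvent around $\lambda=0$, using the centred inverse $B_0$ from Lemma~\ref{lem:finite-mixing} with $L=L_N$ and $\pi=\pi_N$.

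\emph{Existence of the inverse.} Suppose $(\lambda M_{v_\theta}-L_N)f=0$ and let $c^*$ maximise $|f|$; by symmetry take $f(c^*)>0$. At $c^*$ we have $(L_Nf)(c^*)\le0$, so the equation gives $\lambda v_\theta(c^*)f(c^*)\le0$. Hence $v_\theta(c^*)=0$ and $(L_Nf)(c^*)=0$, which forces $f$ to equal $f(c^*)$ at every neighbour of $c^*$. Repeating the argument along paths of the irreducible chain, $f$ is constant and attains its maximum at states with $v_\theta>0$. Such states exist, for instance $Ne_\alpha$, because $\theta\in J\subset(0,1)$. The inequality there then forces $f\equiv0$.

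\emph{The bound.} Write $M=M_{v_\theta}$ and $m_\theta=\pi_Nv_\theta$. Since $\pi_N(Ne_\alpha)>0$, the function $\theta\mapsto m_\theta\ge\min(\theta,1-\theta)N\pi_N(Ne_\alpha)$ is bounded below on $J$, uniformly. Let $u=\lambda R_{\lambda,\theta}g$, so that $\lambda Mu-L_Nu=\lambda g$. Split $u=a\1+w$ with $a=\pi_Nu$ and $\pi_Nw=0$.

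Applying $\pi_N$ and using $\pi_NL_N=0$ gives the scalar equation
\[
am_\theta+\pi_N(Mw)=\pi_Ng .
\]
Applying $B_0$ and using $B_0L_N=\Pi-I$ gives $w=-\lambda B_0(Mu-g)$, whence $\|w\|\le\lambda\|B_0\|(\|v_\theta\|_\infty\|u\|+N)$.

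To close the estimates, I will first bound $\|u\|$ uniformly. Substituting $u=a\1+w$ into the bound for $w$ gives
\[
\|w\|\le C\lambda(|a|+\|w\|+1),
\]
while the scalar equation gives $|a|\le C(\|w\|+1)$. For $\lambda\le\lambda_0$ small these combine to give $|a|,\|w\|$ bounded, and then $\|w\|=O(\lambda)$. The scalar equation then yields
\[
a=\frac{\pi_Ng}{m_\theta}+O(\lambda),
\]
and together with $\|w\|=O(\lambda)$ this is \eqref{eq:sparse-resolvent-bound}. All constants depend only on $\|B_0\|$, $N$ and $\inf_Jm_\theta$. The case $g=\1$, with $\|\1\|=1\le N$, gives the final assertion.

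\emph{Main obstacle.} The delicate point is the bootstrap bound on $\|u\|$, since $\lambda R_{\lambda,\theta}$ is not a priori bounded as $\lambda\to0$. A cleaner alternative would be the probabilistic bound $\|\lambda R_{\lambda,\theta}\1\|\le\sup_c\E_c[\lambda\tau]$, where $\tau$ is the proposal time; this is controlled by the hitting time of $Ne_\alpha$. I will use the linear-algebra bootstrap as the primary route, with the hitting-time bound as a backup.
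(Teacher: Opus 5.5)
Your proposal is correct and follows the paper's proof of the estimate essentially step for step: the split $u=\xi\1+w$, the scalar equation obtained from $\pi_N$, the identity $w=\lambda B_0(g-v_\theta u)$ from Lemma~\ref{lem:finite-mixing}, and the closing inequality $\|w\|\le C\lambda(1+\|w\|)$ with $C=\|B_0\|N(1+N/v_*)$. That inequality needs no bootstrap or hitting-time backup, since $\|w\|$ is automatically finite in finite dimensions. The only real difference is invertibility: you use a discrete maximum principle, while the paper shows that the killed semigroup decays exponentially and identifies $R_{\lambda,\theta}$ with its time integral, which also gives the nonnegative occupation-resolvent form used in Proposition~\ref{prop:rare-success}.
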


\begin{proof}
We first show that the inverse exists. The function $v_\theta$ is
nonnegative and is positive at some count state. From every state,
irreducibility gives a path to such a state, followed by killing
at rate $\lambda v_\theta$, with positive probability. Since there
are finitely many states, killing occurs with probability bounded
away from zero in a fixed time interval. By the Markov property,
the survival semigroup decays exponentially. Its time integral is
$R_{\lambda,\theta}$.

Apply Lemma~\ref{lem:finite-mixing} with $L=L_N$. Put
$K_0\ge\max(1,\|B_0\|)$ and
\[
 v_* = \inf_{\theta\in J}\pi_Nv_\theta>0.
\]
The stationary mean count is positive at each port, so $v_*>0$.
We also have $\|v_\theta\|\le N$.
Decompose
\[
 u=\lambda R_{\lambda,\theta}g=\xi\1+w,
 \qquad \xi=\pi_Nu,\quad \pi_Nw=0.
\]
The resolvent equation gives the first identity below. Applying
$B_0$ and taking the stationary mean give the remaining two:
\begin{equation}\label{eq:resolvent-decomposition}
\begin{aligned}
 L_Nu&=\lambda(v_\theta u-g),\\
 w&=\lambda B_0\bigl(g-v_\theta(\xi\1+w)\bigr),\\
 \xi\pi_Nv_\theta+\pi_N(v_\theta w)&=\pi_Ng.
\end{aligned}
\end{equation}
The last equation yields
$|\xi|\le(N/v_*)(1+\|w\|)$. Hence, with
$A_N=K_0N(1+N/v_*)$,
\[
 \|w\|\le\lambda A_N(1+\|w\|).
\]
For $\lambda_0=(2A_N)^{-1}$ this implies $\|w\|\le2\lambda A_N$.
The same stationary equation gives
\[
 \left|\xi-\frac{\pi_Ng}{\pi_Nv_\theta}\right|
 \le\frac{N}{v_*}\|w\|.
\]
Take $C_R=2A_N(1+N/v_*)$ to obtain
\eqref{eq:sparse-resolvent-bound}.
\end{proof}

Since the first proposal occurs almost surely, its interpretation
in Section~\ref{sec:transplants} gives
$\lambda R_{\lambda,\theta}v_\theta=\1$. The lemma also implies
\[
 \E_c T_\lambda=R_{\lambda,\theta}\1(c)
 =\frac{1}{\lambda\pi_Nv_\theta}+O(1)
\]
uniformly in $c$ and $\theta\in J$, where $T_\lambda$ is the first
proposal time. Thus the leading terms for its direction and waiting
time depend only on the stationary population law.

\subsection{Colour speeds}

Let mutant particles have speed $t>0$ and residents speed one.
The coloured generator is obtained from
\eqref{eq:two-colour-rates} by multiplying each rate by $t$ when
the moving or dying category $r$ is mutant. Resident rates are
unchanged. This is the process in
\eqref{eq:speed-port-coefficients}; write $B_N(t)$ for its
transient matrix and $r_N(t)$ for its rates to all-mutant fixation.

On a compact interval $K\subset(0,\infty)$ the marked total rate has a
uniform upper bound, while every specified copying event has rate at
least
\[
 \frac{\min(1,\inf K)\min_i k_i}{N-1}>0.
\]
Applying the proof of Lemma~\ref{lem:fixation} with these bounds
gives a constant $p_K>0$ such that, for every integer $r\ge0$,
\begin{equation}\label{eq:speed-fixation-tail}
\begin{gathered}
 \sup_{t\in K}\sup_z\Prob_z(\Theta_t>r)\le(1-p_K)^r,
 \\ \sup_{t\in K}\sup_z\E_z\Theta_t\le1/p_K,
\end{gathered}
\end{equation}
where the suprema over $z$ run through mixed colour configurations.
The entries of $B_N(t),r_N(t)$ are affine in $t$ and $B_N(t)$ is
invertible for every positive $t$. Therefore
\[
 f_t(j,d)=\bigl[(-B_N(t))^{-1}r_N(t)\bigr](e_j,d)
\]
is rational and has no singularities on $(0,\infty)$. This proves the
fixation continuity used in Remark~\ref{rem:colour-speeds};
\eqref{eq:speed-fixation-tail} gives uniform duration estimates on
compact speed intervals.

\end{document}